\title{Compactness Conditions in Universal Algebraic Geometry}
\author{P. Modabberi \and M. Shahryari}
\address{ P. Modabberi: Department of Pure Mathematics,  Faculty of Mathematical
Sciences, University of Tabriz, Tabriz, Iran}
\email{p\_modabberi@tabrizu.ac.ir}
\address{M. Shahryari: Department of Pure Mathematics,  Faculty of Mathematical
Sciences, University of Tabriz, Tabriz, Iran}
\email{mshahryari@tabrizu.ac.ir}
\newtheorem{example}{Example}
\newtheorem{corollary}{Corollary}
\newtheorem{proposition}{Proposition}
\newtheorem {theorem}{Theorem}
\newtheorem{lemma}{Lemma}
\newtheorem{definition}{Definition}
\begin{document}

\maketitle

\begin{abstract}
In this article, the properties of being equational noetherian, $q_{\omega}$ and $u_{\omega}$-compactness, and equational Artinian
are studied from the perspective of the Zariski topology. The equational conditions on the relative free algebras of arbitrary varieties
are also investigated and their relations to some logic and model theory notions are obtained. Some applications for the case of the universal algebraic geometry over groups are also introduced.
\end{abstract}

{\bf AMS Subject Classification} Primary 03C99, Secondary 08A99 and 14A99.\\
{\bf Keywords} algebraic structures; equations; algebraic set; radical ideal; coordinate algebra; Zariski topology;
noetherain algebra; equationally noetherian algebra; $q_{\omega}$-compactness; $u_{\omega}$-compactness; meta-noetherian algebras; meta-compact spaces; equational Artinian algebras; prevarieties;  varieties; relative free algebras; chain conditions; eqautional domains; equational conditions; Hilbert's basis theorem.

\begin{center}{\bf TABLE OF CONTENTS}
\end{center}
\bigskip
\bigskip

1. INTRODUCTION

\bigskip
2. BASIC NOTIONS
\bigskip

\ \ \ 2.1 Systems of equations and algebraic sets

\ \ \ 2.2 Radicals and coordinate algebras

\ \ \ 2.3 Equational noetherian algebras

\ \ \ 2.4 Unification theorems

\bigskip
3. COMPACTNESS CONDITIONS

\bigskip
\ \ \ 3.1 Variants of equational conditions

\ \ \ 3.2 $q_{\omega}$-compactness and Zariski topology

\ \ \ 3.3 Meta-compact algebras

\bigskip
4. RELATIVELY FREE ALGEBRAS

\bigskip
\ \ \ 4.1 Relatively free algebras

\ \ \ 4.2 Finitely axiomatizable classes

\bigskip
5. SOME EQUATIONAL NOETHERIAN GROUPS

\bigskip
6. EQUATIONAL ARTINIAN ALGEBRAS

\bigskip
\ \ \ 6.1 $A$-radicals

\ \ \ 6.2 Relative systems of equations

\ \ \ 6.3 Chain conditions on the ideals of relative free algebras

\ \ \ 6.4 Hilbert's basis theorem

\ \ \ 6.5 Examples of equational Artinian algebras

\bigskip
7. OTHER TYPES OF EQUATIONAL CONDITIONS

\bigskip
7. REFERENCES


\section{Introduction}
Universal algebraic geometry is a new area of modern algebra, whose
subject is basically the study of equations over an arbitrary
algebraic structure $A$. In the classical algebraic geometry $A$ is
a field.  Many articles already published about algebraic geometry
over groups, see \cite{BMR1}, \cite{BMR2}, \cite{BMRom},
\cite{KM}, and \cite{MR}. In an outstanding series of papers, O. Kharlampovich and A. Miyasnikov
developed algebraic geometry over free groups to give
affirmative answer for an old problem of Alfred Tarski concerning
elementary theory of free groups (see \cite{KMTarski} and also \cite{SEL} for the independent solution of Z. Sela). Also in \cite{KMTarski2},
a problem of Tarski about decidablity of the elementary theory of free groups is solved. Algebraic
geometry over algebraic structures is also developed for algebras
other than groups, for example there are results about algebraic
geometry over Lie algebras and monoids, see \cite{DR1}, \cite{MSH},
and \cite{SHEV}. Systematic study of universal algebraic geometry is
done in a series of articles by V. Remeslennikov, A. Myasnikov and
E. Daniyarova in \cite{DMR1}, \cite{DMR2}, \cite{DMR3}, and
\cite{DMR4}.

In this article, we are dealing with the equational conditions in
the universal algebraic geometry, i.e. different conditions relating
systems of equations especially conditions about systems and
sub-systems of equations over algebras. The main examples of such
conditions are equational noetherian property and its variants (weak
equational noetherianity, $n$-equational noetherianity, $q_{\omega}$
and $u_{\omega}$-compactness),
as well as equational Artinian
property of algebras. We begin with a   review of basic concepts of
universal algebraic geometry and  we describe the relation between
the properties of being equational noetherian, $q_{\omega}$ and
$u_{\omega}$-compactness and compactness of certain sets in the
Zariski topology. We also discuss the concept of a meta-compact
algebra and its relation to the meta-compactness of the sets in the
Zariski topology. Then we introduce the notion of equational
Artinian algebras. We provide some necessary conditions
for a relative free algebra to be equational noetherian. We also
show that the equational noetherianity of some relatively free
algebras has interesting logical implications for certain subclasses
of the corresponding variety.  After defining the notion of relative
systems of equations, we show that ascending (descending) chain
conditions on the ideals of the relative free algebra is equivalent
to equational noetherian (equational Artinian) property of all
elements of the corresponding variety. A set of other types of
interesting equational conditions and some questions is also
presented at the end of the article.

\section{Basic notions}
This section is devoted to a fast review of the basic concepts of the universal algebraic geometry. We suggest \cite{BS}, \cite{Gor} and \cite{Mal}
for reader who is not familiar to the universal algebra. The reader also would use \cite{DMR1}, \cite{DMR2}, \cite{DMR3}, and
\cite{DMR4}, for extended exposition of the universal algebraic geometry.  Our notations here are almost the same as in the above mentioned papers.
Many results of this work can be stated for structures over any first order language, but for the sake of simplicity, we restrict ourself for the case
of algebraic languages.

\subsection{ Systems of equations and algebraic sets}
Suppose  $\mathcal{L}$ is an arbitrary algebraic
language and $A$ is a fixed algebra of type $\mathcal{L}$. The extended language will be denoted by $\mathcal{L}(A)$ and it is obtained from $\mathcal{L}$ by adding new constant symbols $a\in A$. An algebra $B$ of type $\mathcal{L}(A)$ is called {\em $A$-algebra}, if the map $a\mapsto a^B$ is an embedding of $A$ in $B$. Note that here, $a^B$ denotes the interpretation of the constant symbol $a$ in $B$.
We assume that $X=\{ x_1, \ldots, x_n\}$ is a finite set of variables. We denote the term algebra in the language $\mathcal{L}$ and variables from $X$ by $T_{\mathcal{L}}(X)$, and similarly the term algebra in the extended language $\mathcal{L}(A)$ will denoted by $T_{\mathcal{L}(A)}(X)$.
For the sake of  simplicity, we define our notions in the coefficient free frame, i.e. in the language $\mathcal{L}$ and then we can extend all the definitions to the language $\mathcal{L}(A)$.

Fix an algebraic language $\mathcal{L}$ and a set of variables $X=\{ x_1, \ldots, x_n\}$. An equation is a pair $(p, q)$ of the elements of the term algebra $T_{\mathcal{L}}(X)$. In many cases, we assume that such an equation is the same as the  atomic formula $p(x_1, \ldots, x_n)\approx q(x_1, \ldots, x_n)$ or $p\approx q$ in short. Hence, in this article the set $At_{\mathcal{L}}(X)$ of atomic formulae in the language $\mathcal{L}$ and the product algebra $T_{\mathcal{L}}(X)\times T_{\mathcal{L}}(X)$ are assumed to be equal.

Any subset $S\subseteq At_{\mathcal{L}}(X)$ is called a {\em system of equations} in the language $\mathcal{L}$. A system $S$ is called {\em consistent} over an algebra $A$, if there is an element $(a_1, \ldots,
a_n)\in A^n$ such that for all equations $(p\approx q)\in S$, the
equality
$$
p^A(a_1, \ldots, a_n)=q^A(a_1, \ldots, a_n)
$$
holds. Otherwise, we say that $S$ is {\em in-consistent} over $A$. Note that, $p^A$ and $q^A$ are the corresponding term functions on $A^n$. A system of equations $S$ is called an ideal,
if it corresponds to a  congruence on $T_{\mathcal{L}}(X)$. For an arbitrary system of
equations $S$, the ideal generated by $S$, is the smallest congruence containing $S$ and it is denoted by $[S]$.

For an algebra $A$ of type $\mathcal{L}$, an element $(a_1, \ldots, a_n)\in A^n$  will be denoted by $\overline{a}$, sometimes. Let  $S$ be  a system of equations. Then the set
$$
V_A(S)=\{ \overline{a}\in A^n: \forall (p\approx q)\in S,\ p^A(\overline{a})=q^A(\overline{a})\}
$$
is called an {\em algebraic set}. It is clear that for any non-empty family $\{ S_i\}_{i\in I}$, we have
$$
V_A(\bigcup_{i\in I}S_i)=\bigcap_{i\in I}V_A(S_i).
$$
So, we define a closed set in $A^n$ to be an arbitrary intersections of finite unions of algebraic sets. Therefore, we obtain a topology on $A^n$,
which is called {\em Zariski topology}. For a subset $Y\subseteq A^n$, its closure with respect to Zariski topology is denoted by $\overline{Y}$.
We also denote by $Y^{ac}$ the smallest algebraic set containing $Y$. In general $\overline{Y}=Y^{ac}$ is not true. We say that an algebra $A$ is
{\em equational domain}, if the union of two algebraic set in $A^n$ is again an algebraic set for any $n$. In this case, clearly we have the equality
$\overline{Y}=Y^{ac}$. In the next section, we will prove some of our results in the case of equational domains, therefore we should mention here that
there is a long  list of known examples of such algebras. For details, the reader can see \cite{DMR4}, where there are also some interesting criteria
for an algebra to be equational domain.

Similarly, we can work with systems of equations with coefficients from a fixed algebra $A$. To do this, let $A$ be an algebra of type $\mathcal{L}$
and consider a pair of terms $(p, q)$ in the extended language $\mathcal{L}(A)$. Then we call the atomic formula $p\approx q$ an equation
with coefficient from $A$. We can define a system of equations with coefficients from $A$, so for any such system $S$ and any $A$-algebra $B$,
we can define the algebraic set $V_B(S)$. Other notions may also be defined in a similar manner. Note that some examples of equational domain appear
in the recent case, for example, as it is proved in \cite{DMR4}, no non-trivial group is equational domain in the language
$\mathcal{L}=(1, ^{-1}, \cdot)$ of groups, but any non-abelian free group $F$ is equational domain in the language $\mathcal{L}(F)$.

Almost all results of the next section may also stated for the
general case of $A$-algebras and systems in the extended language
$\mathcal{L}(A)$, but for the sake of simplicity, we restrict
ourself to the coefficient free case.

\subsection{Radicals and coordinate algebras}
For any set $Y\subseteq A^n$, we define
$$
\mathrm{Rad}(Y)=\{ (p, q): \forall\ \overline{a}\in Y,\ p^A(\overline{a})=q^A(\overline{a})\}.
$$
It is easy to see that $\mathrm{Rad}(Y)$ is an ideal in the term algebra. Any ideal of this type is called an {\em $A$-radical ideal} or a {\em radical ideal} for short.
Note that any ideal in the term algebra is in fact a radical ideal. To see the reason, just note that for any ideal $R$ in the term algebra
$T_{\mathcal{L}}(X)$, if we consider the algebra $B(R)=T_{\mathcal{L}}(X)/R$, then $\mathrm{Rad}_{B(R)}(R)=R$.

It is easy to see that a set $Y$ is algebraic if and only if $V_A(\mathrm{Rad}(Y))=Y$. In the general case, we have $V_A(\mathrm{Rad}(Y))=Y^{ac}$,
see \cite{DMR2}. The {\em coordinate algebra} of a set $Y$ is  the quotient algebra
$$
\Gamma(Y)=\frac{T_{\mathcal{L}}(X)}{\mathrm{Rad}(Y)}.
$$
An arbitrary element of $\Gamma(Y)$ is denoted by $[p]_Y$. We define a function $p^Y:Y\to A$ by the rule
$$
p^Y(\overline{a})=p^A(a_1, \ldots,a_n),
$$
which is a  {\em term function} on $Y$. The  set of all such functions will be denoted by $T(Y)$ and it is naturally an algebra of type $\mathcal{L}$.
It is easy to see that the map $[p]_Y\mapsto p^Y$ is a well-defined isomorphism. So, we have $\Gamma(Y)\cong T(Y)$.

For a system of equation, we can also define the radical $\mathrm{Rad}_A(S)$ to be $\mathrm{Rad}(V_A(S))$. Two systems $S$ and $S^{\prime}$
are called equivalent over $A$, if they have the same set of solutions in $A$, i.e. $V_A(S)=V_A(S^{\prime})$. So, clearly $\mathrm{Rad}_A(S)$ is
the largest system which is equivalent to $S$. Note that $[S]\subseteq \mathrm{Rad}_A(S)$.

One of the major problems of the universal algebraic geometry is to determine the structures of algebras which appear as the coordinate algebras.
There are many necessary and sufficient conditions for an algebra to be a coordinate algebra and we will give a summary of such results in the
subsection 2.4.

\subsection{Equational noetherian algebras}
In this article, we are dealing with equational conditions on algebras. The first and maybe the most important condition of this type can be formulated
as follows.

\begin{definition}
An algebra $A$ is called equational noetherian, if for any system of equations $S$, there exists a finite subsystem $S_0\subseteq S$, which is
equivalent to $S$ over $A$, i.e. $V_A(S)=V_A(S_0)$.
\end{definition}

Note that we can consider a bound on the number of variables appearing in $S$ and obtain a weaker notion of {\em $n$-equational noetherian} algebra;
an algebra $A$ is $n$-equational noetherian, if for any system $S$ with at most $n$ variables, there exists a subsystem $S_0\subseteq S$, which is
equivalent to $S$ over $A$, i.e. $V_A(S)=V_A(S_0)$.

If an $A$-algebra  is equational noetherian in the language $\mathcal{L}(A)$, then we call it $A$-equational noetherian. Many examples of equational
noetherian algebras are introduced in \cite{DMR2}. Among them are noetherian rings and linear groups over noetherian rings as well as  free
groups. In \cite{DMR2}, it is proved that the next four assertions are equivalent:\\

{\em
i- An algebra $A$ is equational noetherian.\\

ii- For any system $S$, there exists a finite $S_0\subseteq [S]$, such that $V_A(S)=V_A(S_0)$.\\

iii- For any $n$, the Zariski topology on $A^n$ is noetherian, i.e. any descending chain of closed subsets terminates.\\

iv- Any chain of coordinate algebras and epimorphisims
$$
\Gamma(Y_1)\to \Gamma(Y_2)\to \Gamma(Y_3)\to \cdots
$$
terminates}.\\

So, in the case of equational noetherian algebras, any closed set in $A^n$ is equal to a minimal finite union of {\em irreducible} algebraic sets which
is unique up to a permutation. Note that a set is called irreducible, if it has no proper finite covering consisting of closed sets.
The following theorem is proved in \cite{DMR2}.

\begin{theorem}
Let $A$ be an equational noetherian algebra. Then the following algebras are also equational noetherian:\\

i- any subalgebra and filter-power of $A$.\\

ii- any coordinate algebra over $A$.\\

iii- any fully residually  $A$-algebra.\\

iv- any algebra belonging to the quasi-variety generated by $A$.\\

v- any algebra universally equivalent to $A$.\\

vi- any limit algebra over $A$.\\

vii- any finitely generated algebra defined by a complete atomic type in the universal theory of $A$ or in the set of quasi-identities of $A$.
\end{theorem}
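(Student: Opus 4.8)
The plan is to deduce all seven assertions from a single elementary observation about quasi-identities. Namely, for $X=\{x_1,\dots,x_n\}$, a system $S\subseteq At_{\mathcal{L}}(X)$, a finite subsystem $S_0\subseteq S$, and an arbitrary algebra $C$ of type $\mathcal{L}$, the equality $V_C(S)=V_C(S_0)$ holds if and only if $C$ satisfies, for every $(p\approx q)\in S$, the sentence
$$
\Phi_{p,q}\colon\quad \forall\overline{x}\,\Bigl(\bigwedge_{(r\approx s)\in S_0} r\approx s\ \longrightarrow\ p\approx q\Bigr).
$$
Here $V_C(S)\subseteq V_C(S_0)$ is automatic (more equations, fewer solutions), and the reverse inclusion asserts exactly that every tuple satisfying $S_0$ satisfies each member of $S$, which is the content of the $\Phi_{p,q}$; when $S_0=\emptyset$ these degenerate to the identities $\forall\overline{x}\,(p\approx q)$. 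The point is that each $\Phi_{p,q}$ is a universal Horn sentence, hence is preserved under subalgebras, direct and reduced products, and ultrapowers.

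Granting this, the theorem follows quickly. Because the setting is coefficient-free, a system $S$ is literally the same set of $\mathcal{L}$-equations over every algebra; so if $A$ is equational noetherian and $B$ is any algebra of type $\mathcal{L}$ that satisfies every quasi-identity valid in $A$, then, given a system $S$, we use $A$ to pick a finite $S_0\subseteq S$ with $V_A(S)=V_A(S_0)$, note $A\models\Phi_{p,q}$ for all $(p\approx q)\in S$, deduce $B\models\Phi_{p,q}$, and read off $V_B(S)=V_B(S_0)$; hence $B$ is equational noetherian. It then remains to observe that each listed algebra satisfies every quasi-identity of $A$. For (iv) this is Mal'cev's characterization of the quasi-variety generated by $A$. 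For (i), (ii) and (iii): quasi-identities, being universal Horn sentences, are preserved by subalgebras and by direct and reduced products (hence by filter-powers), a coordinate algebra $\Gamma(Y)$ is isomorphic to the term-function algebra $T(Y)$, a subalgebra of the power $A^Y$, and a fully residually $A$-algebra is in particular residually $A$, so it embeds into a power of $A$. For (v): an algebra universally equivalent to $A$ satisfies $\mathrm{Th}_{\forall}(A)$, which includes all quasi-identities of $A$. For (vi): a limit algebra over $A$ is a subalgebra of an ultrapower of $A$ (by definition, or by the standard characterization of limit algebras), and both operations preserve quasi-identities. For (vii): an algebra defined by a complete atomic type consistent with $\mathrm{Th}_{\forall}(A)$, respectively with the quasi-identities of $A$, embeds via its generators into a model of $\mathrm{Th}_{\forall}(A)$, respectively of those quasi-identities, and since universal sentences pass to substructures it satisfies every quasi-identity of $A$. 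As an aside, the subalgebra case of (i) also follows by noting that the Zariski topology on $B^n$ is the subspace topology induced from $A^n$, hence noetherian, via the equivalence with equational noetherianity recalled above.

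There is no real difficulty beyond bookkeeping. The two points needing attention are: checking that a complete atomic type indeed determines a congruence on $T_{\mathcal{L}}(X)$, so that ``the algebra it defines'' is well posed (one uses maximality and consistency of the type together with the negated atomic formulas in it to see that the natural surjection onto a subalgebra of a realizing model is injective), and, for (vi)--(vii), quoting the relevant facts about limit algebras and about realizations of atomic types embedding into substructures of models of $\mathrm{Th}_{\forall}(A)$. Accordingly (vii) is the item requiring the most outside input. The only genuine conceptual restriction is the coefficient-free hypothesis, which guarantees that systems of equations are algebra-independent and that the sentences $\Phi_{p,q}$ transfer unambiguously; over $\mathcal{L}(A)$ one would instead carry along the atomic diagram of $A$.
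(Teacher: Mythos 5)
The paper does not actually supply a proof of this theorem: the statement is Theorem~1, which the authors explicitly attribute to Daniyarova--Myasnikov--Remeslennikov and quote without argument ("The following theorem is proved in \cite{DMR2}"). So there is no in-paper proof to compare against. Evaluated on its own, your argument is correct and nicely unified. The key reduction — that for a finite $S_0\subseteq S$ the equality $V_C(S)=V_C(S_0)$ is equivalent to $C$ satisfying the universal Horn sentences $\Phi_{p,q}$ for all $(p\approx q)\in S$, and that these are quasi-identities (degenerating to identities when $S_0=\emptyset$) — is exactly the right pivot, and it cleanly folds items (i)--(vii) into a single preservation fact: every algebra on the list satisfies $\mathrm{QId}(A)$, because subalgebras and reduced products preserve universal Horn sentences, the quasi-variety generated by $A$ is by Mal'cev the class of models of $\mathrm{QId}(A)$, $\mathrm{Th}_\forall(A)\supseteq\mathrm{QId}(A)$, coordinate algebras are subalgebras of powers $A^Y$, fully residually $A$-algebras embed in powers of $A$, limit algebras embed in ultrapowers, and algebras defined by complete atomic types embed (via a realization) in a model of $\mathrm{Th}_\forall(A)$ or of $\mathrm{QId}(A)$. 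The two places you flag as needing care are the right ones: (vi) rests on the characterization of limit algebras as subalgebras of ultrapowers (this is standard for finitely generated limit algebras, which is the intended reading), and (vii) requires checking that the complete atomic type yields a well-defined congruence and an embedding into a model of the relevant universal theory, which your sketch addresses. For (i) it may be worth stating explicitly that the filter-power case is the Horn preservation theorem for reduced products applied to a constant family of factors equal to $A$. In short: correct, and a cleaner single-lemma organization than a seven-case check; the only caveat is that the coefficient-free hypothesis is essential to the transfer of $\Phi_{p,q}$, as you note, and an $\mathcal{L}(A)$ version would need the diagram of $A$ carried along.
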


We can generalize the concept of equational noetherian algebras by dropping the condition $S_0\subseteq S$ in the definition 1. More precisely we have;

\begin{definition}
An algebra $A$ is weak equational noetherian, if for any system $S$ there exists a finite system $S_0$, equivalent to $S$ over $A$.
\end{definition}

Equivalently, an algebra $A$ is weak equational noetherian, if and only if for any system $S$, the radical ideal $\mathrm{Rad}_A(S)$ is finitely
generated, i.e. there exists a finite $S_0\subseteq \mathrm{Rad}_A(S)$ such that $\mathrm{Rad}_A(S)=\mathrm{Rad}_A(S_0)$. We can explain the
logical meaning of this equality as follows. Let $\mathrm{QId}(A)$ be the set of quasi-identities of $A$. Then $A$ is weak equational noetherian iff,
for any system of equations $S$, exist finitely many equation $p_1\approx q_1, \ldots, p_m\approx q_m$ such that
$$
\mathrm{Rad}_A(S)=\{ (p,q):\ (\forall x_1\ldots\forall x_n(\bigwedge_{i=1}^m p_i\approx q_i\Rightarrow p\approx q))\in \mathrm{QId}(A)\}.
$$

\subsection{Unification theorems}
Many variants of the unification theorems are proved for universal algebraic geometry in \cite{DMR1}, \cite{DMR2} and \cite{DMR3}.
The main aim of this type of theorems is to determine  algebras which are the coordinate algebra of an algebraic set. In this subsection, we discuss just one of these unification theorems and the reader can consult the above mentioned articles for detailed exposition of notions and proofs.

\begin{theorem}
Let $A$ and $\Gamma$ be  algebras in a language $\mathcal{L}$. Suppose $A$ is equational noetherian and $\Gamma$ is finitely generated.
Then the following assertions are equivalent.\\

i- $\Gamma$ is the coordinate algebra of some irreducible algebraic set over $A$.\\

ii- $\Gamma$ is a fully residually $A$-algebra. This means that for any finite subset $C\subseteq \Gamma$, there exists a
homomorphism $\alpha:\Gamma\to A$, such that the restriction of $\alpha$ to $C$ is injective.\\

iii- $\Gamma$ embeds into some ultra-power of $A$.\\

iv- $\Gamma$ belongs to the universal closure of $A$, i.e. $Th_{\forall}(A)\subseteq Th_{\forall}(\Gamma)$.\\

v- $\Gamma$ is a limit algebra over $A$.\\

vi- $\Gamma$ is defined by a complete type in $Th_{\forall}(A)$.
\end{theorem}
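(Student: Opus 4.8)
The plan is to establish a cycle of implications with full residuality, assertion (ii), as the hub. Concretely I would prove $(i)\Rightarrow(ii)\Rightarrow(iii)\Rightarrow(iv)\Rightarrow(vi)\Rightarrow(iii)$, then close the loop with $(iii)\Rightarrow(ii)$ and $(ii)\Rightarrow(i)$, and finally splice in the limit-algebra clause via $(ii)\Leftrightarrow(v)$. The two hypotheses play asymmetric roles: finite generation of $\Gamma$ is what lets each condition be tested on a fixed finite tuple of generators, whereas equational noetherianity of $A$ is needed only in the implications $(iii)\Rightarrow(ii)$ and $(ii)\Rightarrow(i)$; everything else is either routine model theory or a formal ultrafilter argument.

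For the hypothesis-free part: $(i)\Rightarrow(ii)$ goes by noting that if $\Gamma=\Gamma(Y)$ with $Y\subseteq A^n$ irreducible and $C=\{[p_1]_Y,\dots,[p_k]_Y\}$ consists of pairwise distinct elements, then each $U_{ij}=\{\overline{a}\in Y:p_i^A(\overline{a})\neq p_j^A(\overline{a})\}$ is nonempty with complement in $Y$ the algebraic set $Y\cap V_A(\{(p_i,p_j)\})$; since $Y$ admits no finite covering by proper closed subsets, $\bigcap_{i<j}U_{ij}\neq\varnothing$, and evaluation at any of its points is a homomorphism $\Gamma\to A$ injective on $C$. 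Then $(ii)\Rightarrow(iii)$ is the standard construction: index by the finite subsets $C\subseteq\Gamma$, bundle the separating homomorphisms into $\Phi\colon\Gamma\to A^{I}$, and push $\Phi$ forward along any ultrafilter $\mathcal U$ refining the filter generated by the sets $\{C:C\supseteq C_0\}$, so that distinct elements of $\Gamma$ stay distinct in $A^{I}/\mathcal U$. The model-theoretic arc $(iii)\Rightarrow(iv)\Rightarrow(vi)\Rightarrow(iii)$ is routine: ultrapowers are elementarily equivalent to $A$ by the theorem of {\L}o\'s and universal sentences descend to subalgebras, giving (iv); since $\Gamma$ is generated by a finite tuple $\overline{g}$, the quantifier-free type $p(\overline{x})$ of $\overline{g}$ over $\varnothing$ is consistent with $Th_{\forall}(A)$ and complete as a quantifier-free type, and the subalgebra generated by any realization of $p$ is isomorphic to $\Gamma$, which is (vi); and completeness of $p$ together with consistency with $Th_{\forall}(A)$ makes $Th(A)\cup p(\overline{c})$ finitely satisfiable — an unsatisfiable finite fragment would produce a universal sentence of $A$ refuting some formula of $p$ — so $\Gamma$ embeds in a model $\mathcal{N}\models Th(A)$, whence $\mathcal{N}\equiv A$ and, by Keisler--Shelah, $\Gamma$ embeds in an ultrapower of $A$, which is (iii).

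The substance is in the return arc, and I expect $(iii)\Rightarrow(ii)$ to be \emph{the main obstacle}, since it is the one point where the geometry over $A$ must be controlled globally. Write $\Gamma=T_{\mathcal{L}}(X)/R$ with $X=\{x_1,\dots,x_n\}$, so an embedding $\Gamma\hookrightarrow A^{I}/\mathcal U$ is recorded by a map $f\colon I\to A^n$ with $[f]_{\mathcal U}\in V_{A^{I}/\mathcal U}(R)$; the trouble is that $R$ may be infinite, so a single coordinate value $f(i)$ need not lie in $V_A(R)$. Here one uses equational noetherianity of $A$ on the system $R$ to obtain a finite $R_0\subseteq R$ with $V_A(R_0)=V_A(R)$; the resulting inclusions $V_A(R_0)\subseteq V_A(\{(u,v)\})$ for $(u,v)\in R$ are universal $\mathcal{L}$-sentences valid in $A$, hence in $A^{I}/\mathcal U$, so $[f]_{\mathcal U}\in V_{A^{I}/\mathcal U}(R_0)$ forces $f(i)\in V_A(R_0)=V_A(R)$ for $\mathcal U$-almost all $i$. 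Intersecting that $\mathcal U$-large set with the ($\mathcal U$-large, hence nonempty) set of indices at which $f$ separates the finitely many pairs of a given finite $C\subseteq\Gamma$ yields an index $i^{\ast}$ at which $\overline{x}\mapsto f(i^{\ast})$ is a well-defined homomorphism $\Gamma\to A$ injective on $C$. For $(ii)\Rightarrow(i)$, put $Y=V_A(R)$: full residuality forces $R=\mathrm{Rad}_A(R)$ (given $(p,q)\notin R$, separate the images of $p$ and $q$ in $\Gamma$ by some $\alpha\colon\Gamma\to A$ and read off a point of $Y$ at which $p^A\neq q^A$), hence $\Gamma\cong\Gamma(Y)$; equational noetherianity then makes the Zariski topology noetherian, so $Y=Y_1\cup\cdots\cup Y_m$ is a finite union of irreducible components, and if $m\ge2$ then for each ordered pair $Y_j\neq Y_{j'}$ one picks an equation vanishing on $Y_j$ but not on $Y_{j'}$; the finitely many resulting pairs assemble into a finite $C\subseteq\Gamma$ that no homomorphism $\Gamma\to A$ can separate (any such map is evaluation at a point of some component $Y_l$, which then collapses the pair coming from the ordered pair $(Y_l,Y_{j'})$), contradicting (ii), so $m=1$ and $Y$ is irreducible. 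Finally $(ii)\Leftrightarrow(v)$ is essentially definitional: over an equational noetherian $A$ the limit algebras over $A$ are exactly the finitely generated fully residually $A$-algebras, a characterisation I would cite from \cite{DMR2} rather than reprove.
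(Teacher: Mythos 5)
The paper does not supply a proof of this theorem: it is stated in subsection 2.4 as one of the unification theorems cited from \cite{DMR1}, \cite{DMR2}, \cite{DMR3}, with the explicit remark that ``the reader can consult the above mentioned articles for detailed exposition of notions and proofs.'' So there is no in-paper proof to compare against; what can be said is whether your reconstruction is sound.

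Your argument is, in the main, correct and follows the route one would expect from the DMR framework. The implications $(i)\Rightarrow(ii)$ via nonemptiness of $\bigcap_{i<j}U_{ij}$ for an irreducible $Y$, $(ii)\Rightarrow(iii)$ via the ultrafilter on finite subsets, the model-theoretic arc through $(iv)$ and $(vi)$, and the use of equational noetherianity exactly in $(iii)\Rightarrow(ii)$ (replace $R$ by a finite $R_0$ with $V_A(R_0)=V_A(R)$ so that $\mathcal U$-almost all coordinates of the witnessing map actually land in $V_A(R)$) and in $(ii)\Rightarrow(i)$ (noetherian decomposition into irreducible components, then the separating-equation contradiction) are all the right ideas, and I do not see a genuine gap. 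Two small remarks. First, in $(vi)\Rightarrow(iii)$ the appeal to Keisler--Shelah is heavier than necessary: once $\Gamma$ embeds in some $\mathcal N\models\mathrm{Th}(A)$, in particular $\mathrm{Th}_{\forall}(A)\subseteq\mathrm{Th}_{\forall}(\Gamma)$, and the classical embedding theorem (a structure satisfies the universal theory of $A$ iff it embeds in an ultrapower of $A$) gives $(iii)$ directly, without invoking isomorphism of ultrapowers. Second, your treatment of $(ii)\Leftrightarrow(v)$ simply cites the characterization of limit algebras from \cite{DMR2}; since that characterization is essentially a restatement of one clause of the theorem being proved, a self-contained proof would need to unpack the definition of ``limit algebra'' rather than lean on the cited equivalence --- though for the purposes of this paper, where the theorem is itself cited rather than proved, deferring that clause to \cite{DMR2} or \cite{DMR3} is consistent with the authors' own treatment.
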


There are similar theorems for the cases where $A$ is weak equational noetherian, or it is $q_{\omega}$-compact or $u_{\omega}$-compact.
See \cite{DMR3} for a detailed discussion.

\section{Compactness conditions}
In this section we study some of the important equational conditions over algebras by means of the Zariski topology.
The notions of $q_{\omega}$ and $u_{\omega}$-compact algebras are introduced in \cite{DMR3}. In the equational domain case,
we will give a topological characterization of $q_{\omega}$ and $u_{\omega}$-compact algebras. We also introduce a
new class of algebras which generalizes the class of equational noetherian algebras using the concept of {\em meta-compact} topological spaces.

\subsection{Variants of equational conditions}
Recall that an algebra $A$ is equational noetherian if and only if any system of equations is equivalent to a finite subsystem over $A$.
We also defined weak equational noetherian algebras in the previous section. Both of these properties are defined by applying certain conditions on
the systems of equations. Clearly, one can use many different conditions to obtain new classes of algebras. Here we introduce two more examples
of such classes.

\begin{definition}
An algebra $A$ is called $q_{\omega}$-compact if for any system of equations $S$ and any equation $p\approx q$ with $V_A(S)\subseteq V_A(p\approx q)$,
there exists a finite subset $S_0\subseteq S$ such that $V_A(S_0)\subseteq V_A(p\approx q)$. Similarly $A$ is called $u_{\omega}$-compact,
if for any arbitrary system $S$ and any finite system $\{p_1\approx q_1, \ldots, p_m\approx q_m\}$, the inclusion
$$
V_A(S)\subseteq \bigcup_{i=1}^mV_A(p_i\approx q_i)
$$
implies the existence of a finite subset $S_0\subseteq S$ such that
$$
V_A(S_0)\subseteq \bigcup_{i=1}^mV_A(p_i\approx q_i).
$$
\end{definition}

It is easy to see that any equational noetherian algebra is $u_{\omega}$-compact and any $u_{\omega}$-compact algebra is $q_{\omega}$-compact. The converse statements are not true and the reader may see \cite{DMR3} for some counterexamples.  The unification theorem is proved for the algebras in these new classes, \cite{DMR3}. As in the case of $n$-equational
noetherian algebras (2.3), we can also define $q^n_{\omega}$-compact algebras as well as $u^n_{\omega}$-compact algebras.

In the next subsection, we give topological characterization for $q_{\omega}$ and $u_{\omega}$-compact algebras. Note that, an algebra $A$ is
equational noetherian if and only if for all $n$, the space $A^n$ is noetherian and this is equivalent to say that any subset of $A^n$ is
compact in the Zariski topology. To see this, suppose for example every subset of $A^n$ is compact and $S$ is an arbitrary system of equations.
Clearly we have
$$
V_A(S)=\bigcap_{(p\approx q)\in S}V_A(p\approx q),
$$
hence equivalently
$$
A^n\setminus V_A(S)=\bigcup_{(p\approx q)\in S}A^n\setminus V_A(p\approx q).
$$
By the compactness, there are finite number of equations $p_1\approx q_1, \ldots, p_m\approx q_m$ in $S$, such that
$$
A^n\setminus V_A(S)=\bigcup_{i=1}^m A^n\setminus V_A(p_i\approx q_i).
$$
This shows that $V_A(S)=V_A(S_0)$, where $S_0=\{ p_1\approx q_1, \ldots, p_m\approx q_m\}$. Therefore $A$ is equational noetherian.
It can be easily shown that the converse is also true. So we have

\begin{proposition}
An algebra $A$ is equational noetherian if and only if, for any $n$ all subsets of $A^n$ are compact.
\end{proposition}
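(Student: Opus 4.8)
The plan is to prove the two implications separately, leaning on the chain of equivalences (i)--(iv) recorded above from \cite{DMR2}; in particular, the equivalence of equational noetherianity with the Zariski topology on every $A^n$ being noetherian does most of the work.

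For the ``only if'' direction, assume $A$ is equational noetherian. By (i)$\Leftrightarrow$(iii), for each $n$ the space $A^n$ is noetherian, i.e. it satisfies the descending chain condition on closed sets, equivalently the ascending chain condition on open sets. I would then invoke (or quickly reprove) the standard fact that a noetherian space is hereditarily compact: given $Y\subseteq A^n$ and a family $\{V_i\}_{i\in I}$ of open subsets of $A^n$ with $Y\subseteq\bigcup_i V_i$ (we may assume $I\neq\varnothing$, the empty case being trivial), the collection $\{\bigcup_{i\in F}V_i:\ \varnothing\neq F\subseteq I\text{ finite}\}$ is a nonempty family of open sets, hence, by the ACC, has a maximal element $W=\bigcup_{i\in F_0}V_i$. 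For any $j\in I$ the set $W\cup V_j$ lies in this collection and contains $W$, so maximality forces $V_j\subseteq W$; therefore $Y\subseteq\bigcup_i V_i\subseteq W$, and $\{V_i\}_{i\in F_0}$ is a finite subcover. Since the opens of the subspace topology on $Y$ are exactly the sets $V_i\cap Y$, this shows every subset of $A^n$ is compact. (Here ``compact'' means every open cover admits a finite subcover, with no separation hypothesis, which matters because $A^n$ is typically far from Hausdorff in the Zariski topology.)

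For the ``if'' direction, I would formalize the argument already sketched in the text. Fix $n$ and an arbitrary system $S$ in the variables $x_1,\dots,x_n$. Since $V_A(S)=\bigcap_{(p\approx q)\in S}V_A(p\approx q)$ and each $V_A(p\approx q)$ is algebraic, hence closed, passing to complements in $A^n$ shows that the open sets $U_{p\approx q}:=A^n\setminus V_A(p\approx q)$ form an open cover of $Y:=A^n\setminus V_A(S)$; moreover $U_{p\approx q}\subseteq Y$ for every $(p\approx q)\in S$, because $V_A(S)\subseteq V_A(p\approx q)$. By hypothesis $Y$ is compact, so there are finitely many equations $p_1\approx q_1,\dots,p_m\approx q_m\in S$ with $Y=\bigcup_{i=1}^m U_{p_i\approx q_i}$ (the inclusion $\supseteq$ is automatic from $U_{p_i\approx q_i}\subseteq Y$, and $\subseteq$ is the covering property). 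Complementing back gives $V_A(S)=\bigcap_{i=1}^m V_A(p_i\approx q_i)=V_A(S_0)$ with $S_0=\{p_1\approx q_1,\dots,p_m\approx q_m\}\subseteq S$ finite, so $A$ is equational noetherian.

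The mathematical content here is light: the only points that need care are keeping track of the distinction between ``covers'' and ``equals'' for the complementary open sets --- handled by the observation $U_{p\approx q}\subseteq Y$ --- and using the non-Hausdorff notion of compactness throughout. No serious obstacle is expected; the proposition is essentially a repackaging of the equivalence (i)$\Leftrightarrow$(iii) together with the elementary topology of noetherian spaces, and the same argument applied with $n$ fixed yields in passing the analogous statement characterizing $n$-equational noetherian algebras.
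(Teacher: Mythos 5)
Your proof is correct and follows essentially the same approach as the paper: the ``if'' direction is the same complementation argument the paper writes out (with a slightly more careful note that each $U_{p\approx q}\subseteq Y$, so the finite subcover gives equality, not merely containment), and for the ``only if'' direction the paper just says it ``can be easily shown,'' whereas you spell out the standard fact that a noetherian space is hereditarily compact via the equivalence (i)$\Leftrightarrow$(iii). No gaps.
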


This proposition is our main motivation to investigate similar criteria for the case of $q_{\omega}$ and $u_{\omega}$-compact algebras.

\subsection{$q_{\omega}$-compactness and Zariski topology}
Let $A$ be an algebra in a language $\mathcal{L}$ and $p\approx q$ be an equation. We denote the open set $A^n\setminus V_A(p\approx q)$
by $C_A(p\approx q)$.

\begin{proposition}
Let $A$ be an equational domain. Then $A$ is $q_{\omega}$-compact if and only if $C_A(p\approx q)$ is compact for all $p\approx q$.
\end{proposition}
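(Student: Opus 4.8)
The plan is to translate everything into statements about open covers of the distinguished open set $C_A(p\approx q)$ and then invoke compactness, the point being that the equational domain hypothesis is exactly what makes the sets $C_A(p\approx q)$ a basis for the Zariski topology. One should first record the trivial fact that $C_A(p\approx q)=A^n\setminus V_A(p\approx q)$ is open, since $V_A(p\approx q)$ is algebraic and hence closed.

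First I would dispatch the implication from compactness to $q_\omega$-compactness, which in fact does not use the equational domain hypothesis. Assume $C_A(p\approx q)$ is compact for every equation $p\approx q$, and let $S$ be a system with $V_A(S)\subseteq V_A(p\approx q)$. Taking complements in $A^n$ and using $V_A(S)=\bigcap_{(r\approx s)\in S}V_A(r\approx s)$, one gets $C_A(p\approx q)\subseteq\bigcup_{(r\approx s)\in S}C_A(r\approx s)$, an open cover of the compact set $C_A(p\approx q)$. Extracting a finite subcover and complementing again yields a finite $S_0\subseteq S$ with $V_A(S_0)\subseteq V_A(p\approx q)$, so $A$ is $q_\omega$-compact.

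For the converse, assume $A$ is a $q_\omega$-compact equational domain and fix $p\approx q$; the goal is the compactness of $C_A(p\approx q)$. The key step is that in an equational domain the family $\{C_A(r\approx s)\}$ is a basis of the Zariski topology on $A^n$: a nonempty closed set is a finite union of algebraic sets, hence a single algebraic set $V_A(S)$, so its complement is $\bigcup_{(r\approx s)\in S}C_A(r\approx s)$, a union of members of the family (the sets $\emptyset$ and $A^n$ being handled trivially). By the usual reduction it then suffices to produce a finite subcover of any cover of $C_A(p\approx q)$ by basic open sets. So suppose $C_A(p\approx q)\subseteq\bigcup_{j\in J}C_A(r_j\approx s_j)$ and set $S=\{\,r_j\approx s_j:j\in J\,\}$; complementing gives $V_A(S)\subseteq V_A(p\approx q)$, and $q_\omega$-compactness of $A$ supplies a finite $S_0\subseteq S$ with $V_A(S_0)\subseteq V_A(p\approx q)$, which upon complementing is the desired finite subcover.

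The one genuinely substantive point, and the place where the equational domain hypothesis is indispensable, is the basis claim in the converse: without it, the complement of a closed set is only a finite intersection of sets $C_A(r\approx s)$, so one would be forced to cover $C_A(p\approx q)$ by finite intersections of such sets, and $q_\omega$-compactness — whose conclusion concerns a single equation on the right — would no longer match. Everything else is de Morgan bookkeeping together with the elementary fact that a subspace is compact iff every cover by basic open sets has a finite subcover.
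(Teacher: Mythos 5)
Your proof is correct and takes essentially the same approach as the paper's: the implication from compactness of each $C_A(p\approx q)$ to $q_\omega$-compactness is identical (and you are right that this direction does not need the equational-domain hypothesis), while in the forward direction the equational-domain hypothesis is used for the same purpose — to write an arbitrary open set as a union of sets of the form $C_A(r\approx s)$. The only cosmetic difference is that you extract this as a ``the $C_A(r\approx s)$ form a basis'' lemma and invoke the standard reduction to basic-open covers, whereas the paper inlines the same decomposition into the given cover $\{C_i\}$ and then pulls back a finite subcover; the two arguments coincide in substance.
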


\begin{proof}
First suppose $A$ is $q_{\omega}$-compact. Let $C_A(p\approx q)\subseteq \bigcup_{i\in I}C_i$, with $C_i\subseteq A^n$ open. Since $A$ is
equational domain, so $C_i=A^n\setminus V_A(S_i)$ for some system $S_i$. We have
$$
A^n\setminus V_A(p\approx q)\subseteq \bigcup_{i\in I}(A^n\setminus V_A(S_i))=A^n\setminus \bigcap_{i\in I}V_A(S_i),
$$
hence
$$
\bigcap_{i\in I}V_A(S_i)\subseteq V_A(p\approx q).
$$
This shows that
$$
V_A(\bigcup_{i\in I}S_i)\subseteq V_A(p\approx q),
$$
and so, there is a finite  $S^{\prime}\subseteq \bigcup_{i\in I}S_i$, with $V_A(S^{\prime})\subseteq V_A(p\approx q)$. We have
$$
S^{\prime}\subseteq S_{i_1}\cup\cdots \cup S_{i_m},
$$
for some $i_1, \ldots, i_m\in I$. Hence
$$
\bigcap_{j=1}^mV_A(S_{i_j})\subseteq V_A(S^{\prime})\subseteq V_A(p\approx q),
$$
and therefore $C_A(p\approx q)\subseteq \bigcup_{j=1}^m C_{i_j}$. This shows that $C_A(p\approx q)$ is compact. Conversely,
suppose any $C_A(p\approx q)$ is compact. Let $V_A(S)\subseteq V_A(p\approx q)$. Then
\begin{eqnarray*}
C_A(p\approx q)&\subseteq& A^n\setminus V_A(S)\\
               &=& A^n\setminus \bigcap_{(p\approx q)\in S}V_A(p\approx q)\\
               &=&\bigcup_{(p\approx q)\in S}(A^n\setminus V_A(p\approx q)).
\end{eqnarray*}
Hence
$$
C_A(p\approx q)\subseteq \bigcup_{i=1}^m(A^n\setminus V_A(p_i\approx q_i)),
$$
with $p_1\approx q_1, \ldots, p_m\approx q_m\in S$. Therefore
$$
V_A(p_1\approx q_1, \ldots, p_m\approx q_m)\subseteq V_A(p\approx q),
$$
and so, $A$ is $q_{\omega}$-compact.
\end{proof}

A similar result is true for $u_{\omega}$-compact equational domains. It can be shown that an equational domain $A$ is $u_{\omega}$-compact
if and only if any finite intersection of sets of the form $C_A(p\approx q)$ is compact.

\subsection{Meta-compact algebras}
A topological space is called {\em meta-compact} if every open covering of it, has a refinement  which is also a covering and every point belongs
to finitely many element of the refinement. Motivating by meta-compact topological spaces, we define meta-compact algebras. Let $S$ be a system of
equations in a language $\mathcal{L}$ and $A$ be an algebra. We denote by $V_A^{\ast}(S)$ the set of all points $(a_1, \ldots, a_n)\in A^n$ such
that for all but finitely many equations $(p\approx q)\in S$, we have $p^A(a_1, \ldots, a_n)=q^A(a_1, \ldots, a_n)$.

\begin{definition}
Let for any in-consistent  system $S$ over $A$, there exists an in-consistent subsystem $S^{\prime}\subseteq S$, such that $V_A^{\ast}(S^{\prime})=A^n$.
Then we call $A$ a meta-compact algebra.
\end{definition}

Any equational  noetherian algebra is also meta-compact. This is because, if $A$ is equational noetherian and $S$ is an in-consistent system, then
there is a finite $S_o\subseteq S$ such that $V_A(S_0)=V_A(S)$ and so, $S_0$ is also in-consistent. But since $S_0$ is finite, so we have clearly
$V^{\ast}_A(S_0)=A^n$. Hence, $A$ is meta-compact.

\begin{proposition}
Let $A$ be meta-compact equational domain. Then for any $n$ the space $A^n$ is a meta-compact topological space.
\end{proposition}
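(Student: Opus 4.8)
The goal is to show that if $A$ is a meta-compact equational domain, then $A^n$ with the Zariski topology is a meta-compact topological space for every $n$. The plan is to take an arbitrary open covering of $A^n$, reformulate it dually as an inconsistent system of equations, apply the meta-compactness of the algebra to extract an inconsistent subsystem whose $V_A^{\ast}$ is all of $A^n$, and then translate this back into a point-finite open refinement of the original cover.

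First I would fix an open covering $\{ U_i \}_{i \in I}$ of $A^n$. Since $A$ is an equational domain, every open set is the complement of a single algebraic set, so I may write $U_i = A^n \setminus V_A(S_i)$ for suitable systems $S_i$; in fact, using that $A$ is an equational domain, each $V_A(S_i)$ is itself an algebraic set, and I can even take each $S_i$ to be a single equation $p_i \approx q_i$ after replacing $V_A(S_i)$ by $\mathrm{Rad}_A(S_i)$ and using the domain property to collapse finite unions — this is the step I would handle carefully. From $\bigcup_{i} U_i = A^n$ we get $\bigcap_{i} V_A(S_i) = \emptyset$, i.e.\ $V_A\bigl(\bigcup_i S_i\bigr) = \emptyset$, so $S = \bigcup_i S_i$ is an inconsistent system over $A$.

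Now I would invoke Definition~7 (meta-compact algebra): there is an inconsistent subsystem $S' \subseteq S$ with $V_A^{\ast}(S') = A^n$. Since $S' \subseteq \bigcup_i S_i$, each equation of $S'$ lies in some $S_i$; choosing one such index for each equation of $S'$ gives a subfamily of systems (or single equations) $\{ S_j \}_{j \in J}$ indexed so that $S' = \bigcup_{j \in J} S'_j$ with $S'_j \subseteq S_j$ and $\emptyset \neq S'_j$. The candidate refinement is $\{ U_j \}_{j \in J}$ (or a suitable shrinking thereof). First, it is still a covering: $V_A^{\ast}(S') = A^n$ means every point satisfies all but finitely many equations of $S'$, hence every point fails at least... — here one must be slightly careful, because $S'$ is inconsistent, so no point satisfies \emph{all} equations of $S'$; combined with $V_A^{\ast}(S') = A^n$, every point $\overline{a}$ satisfies all but finitely many, and at least one (in fact, finitely many but $\geq 1$) equation of $S'$ fails at $\overline{a}$; that equation lies in some $S'_j$, so $\overline{a} \in A^n \setminus V_A(S'_j) \subseteq A^n \setminus V_A(S_j) = U_j$. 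Thus $\{U_j\}_{j \in J}$ covers $A^n$ and refines $\{U_i\}_{i \in I}$. Second, it is point-finite: a point $\overline{a}$ lies in $U_j$ only if some equation of $S'_j$ fails at $\overline{a}$, but by $V_A^{\ast}(S') = A^n$ only finitely many equations of the whole of $S'$ fail at $\overline{a}$, so only finitely many indices $j$ can have an equation of $S'_j$ failing at $\overline{a}$; hence $\overline{a}$ belongs to only finitely many members of the refinement.

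The main obstacle I anticipate is the bookkeeping in the first reduction: passing from a general open covering to one indexed by systems $S_i$ (and ideally single equations) in a way that is compatible with the equational-domain hypothesis and does not destroy the correspondence between indices of the cover and equations of $S$. One has to be sure that when an equation of $S'$ is ``assigned'' to an index $j$, the set $A^n \setminus V_A(S'_j)$ is genuinely contained in $U_j$, which forces $S'_j \subseteq S_j$ (not merely $S'_j \subseteq [S_j]$ or $\mathrm{Rad}_A(S_j)$); keeping the subsystem relation literal throughout is the delicate point, and the equational-domain hypothesis is exactly what lets one replace complements of finite unions by complements of single algebraic sets so that this indexing is clean. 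The point-finiteness and the covering property, once the setup is right, follow directly from unwinding the definition of $V_A^{\ast}$.
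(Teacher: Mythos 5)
Your overall strategy coincides with the paper's: pull the covering back to an inconsistent system $S = \bigcup_i S_i$, apply the meta-compactness of $A$ to get an inconsistent $S' \subseteq S$ with $V_A^{\ast}(S') = A^n$, distribute the equations of $S'$ among the indices so that $S' = \bigcup_j S_j'$ with $S_j' \subseteq S_j$ pairwise disjoint, and read off a point-finite refinement. The paper sets up exactly this partition (by transfinite recursion) and then takes the refinement to be $C_j' := A^n \setminus V_A(S_j')$.

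There is, however, a genuine slip in your point-finiteness step as written. You declare the refinement to be $\{U_j\}_{j\in J}$ and then assert that ``a point $\overline{a}$ lies in $U_j$ only if some equation of $S_j'$ fails at $\overline{a}$.'' That is false: $\overline{a} \in U_j = A^n \setminus V_A(S_j)$ if and only if some equation of the \emph{whole} system $S_j$ fails at $\overline{a}$, and since $S_j'$ may be a proper (even much smaller) subset of $S_j$, a point can lie in $U_j$ while satisfying every equation of $S_j'$. Your point-finiteness bound, which counts indices $j$ for which some equation of $S_j'$ fails, therefore does not control how many $U_j$ contain $\overline{a}$. The fix is precisely the ``suitable shrinking'' you gesture at but do not commit to: take the refinement to be $\{A^n \setminus V_A(S_j')\}_{j\in J}$, not $\{U_j\}_{j\in J}$. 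These shrunken sets are open (each $V_A(S_j')$ is algebraic), satisfy $A^n \setminus V_A(S_j') \subseteq U_j$ so the refinement relation holds, still cover $A^n$ because $\bigcap_j V_A(S_j') = V_A(S') = \varnothing$, and now $\overline{a}$ lies in $A^n \setminus V_A(S_j')$ \emph{iff} some equation of $S_j'$ fails at $\overline{a}$. Since the $S_j'$ are pairwise disjoint and only finitely many equations of $S'$ fail at $\overline{a}$ (this is $V_A^{\ast}(S') = A^n$), point-finiteness follows. Committing to the shrunken family from the start, as the paper does, removes the gap.
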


\begin{proof}
Let $A^n=\bigcup_{\alpha\in I}C_{\alpha}$ be a covering of $A^n$, indexed by a set of ordinals $I=\{ \alpha:\ \alpha\leq \kappa\}$. Since $A$ is
equational domain, every $C_{\alpha}$ has the form
$$
C_{\alpha}=A^n\setminus V_A(S_{\alpha}),
$$
for some system of equations $S_{\alpha}$. We have $\bigcap_{\alpha}V_A(S_{\alpha})=\emptyset$. Suppose $S=\bigcup_{\alpha}S_{\alpha}$.
Then $V_A(S)=\emptyset$, and therefore there exists an in-consistent subsystem $S^{\prime}$, such that $V_A^{\ast}(S^{\prime})=A^n$.
Define by transfinite induction
\begin{eqnarray*}
S_0^{\prime}&=&S^{\prime}\cap S_0,\\
S_{\alpha^{+}}^{\prime}&=&(S^{\prime}\cap S_{\alpha^{+}})\setminus S_{\alpha}^{\prime},
\end{eqnarray*}
and for any limit ordinal, we set
\begin{eqnarray*}
S_{\lambda}^{\prime}&=&(S^{\prime}\cap S_{\lambda})\setminus \bigcup_{\alpha<\lambda}S_{\alpha}^{\prime}.
\end{eqnarray*}
We have clearly $S^{\prime}=\bigcup_{\alpha}S_{\alpha}^{\prime}$, $S_{\alpha}^{\prime}\subseteq S_{\alpha}$, and $S_{\alpha}^{\prime}\cap
S_{\beta}^{\prime}=\emptyset$, for any distinct $\alpha$ and $\beta$. Now, let
$$
C_{\alpha}^{\prime}=A^n\setminus V_A(S_{\alpha}^{\prime}).
$$
We have $C_{\alpha}^{\prime}\subseteq C_{\alpha}$ and $\bigcup_{\alpha}C_{\alpha}^{\prime}=A^n$. Hence, we obtain a refinement of the given covering.
Now, for $\overline{a}\in A^n$, we have $\overline{a}\in V_A^{\ast}(S^{\prime})$, so there are finitely many equations
$$
p_1\approx q_1, \ldots, p_m\approx q_m \in S^{\prime}
$$
such that $p_i^A(\overline{a})\neq q_i^A(\overline{a})$, for $1\leq i\leq m$. For any $i$, there exists a unique $\alpha_i\in I$ such that
$(p_i\approx q_i)\in S_{\alpha_i}^{\prime}$. Therefore, $\overline{a}$ does not belong to
$$
V_A(S_{\alpha_1}^{\prime}), \ldots, V_A(S_{\alpha_m}^{\prime})
$$
and for other $\alpha\in I$, we have $\overline{a}\in V_A(S_{\alpha}^{\prime})$ (since $S_{\alpha_i}^{\prime}\cap S_{\alpha}^{\prime}=\emptyset$).
This shows that $\overline{a}$ just belongs to $C_{\alpha_1}^{\prime}, \ldots, C_{\alpha_m}^{\prime}$. We now, proved that $A^n$ is meta-compact.
\end{proof}

\section{Relatively free algebras}
In this section, we study the universal algebraic geometry of the relatively free algebras. We show that for a variety $\mathbf{V}$, any solution
of an equation over the relatively free algebras of $\mathbf{V}$ is corresponds to an identity in $\mathbf{V}$. This idea, leads us to obtain some
interesting results concerning classes of algebras (especially varieties) using concepts of the universal algebraic geometry.

\subsection{Relatively free algebras}
One of the major tools in the universal algebraic geometry is the {\em relatively free algebra} of a given variety over a given set of variables.
We can discuss this notion in more general framework of {\em pre-varieties}. A class $\mathbf{V}$ of algebras of type $\mathcal{L}$ is a pre-variety,
if it is closed under the operations of taking subalgebra and arbitrary direct product. For an arbitrary algebra $A$, we denote the set of all
congruences of $A$ by $\mathrm{Cong}(A)$. If $\mathbf{V}$ is a pre-variety and $X$ is a  set of variables, we can define an ideal of the term
algebra $T_{\mathcal{L}}(X)$ by
$$
R_{\mathbf{V}}(X)=\bigcap\{ R\in \mathrm{Cong}(T_{\mathcal{L}}(X)): \ \frac{T_{\mathcal{L}}(X)}{R}\in \mathbf{V}\}.
$$
So, $R_{\mathbf{V}}(X)$ is the smallest congruence in the term algebra such that the corresponding quotient belongs to $\mathbf{V}$.
The quotient algebra $$
F_{\mathbf{V}}(X)=\frac{T_{\mathcal{L}}(X)}{R_{\mathbf{V}}(X)}
$$
is called the relative free algebra over $X$ in $\mathbf{V}$. It is a member of $\mathbf{V}$ and it can be characterized by the universal
mapping property: it is generated by the set $\overline{X}=\{ x/R_{\mathbf{V}}(X): x\in X\}$ and any map from $\overline{X}$ to an algebra
$A\in \mathbf{V}$ extends uniquely to a homomorphism from $F_{\mathbf{V}}(X)$ to $A$. It can be easily seen that $|X|=|\overline{X}|$.
More details on the universal mapping property can be find in \cite{BS}. We also have a logical characterization of $R_{\mathbf{V}}(X)$.

\begin{lemma}
Let $\mathbf{V}$ be a pre-variety and $X$ be a set of variables. Then
$$
R_{\mathbf{V}}(X)=\{ (p, q):\ V\vDash (\forall x_1\ldots \forall x_n p\approx q)\}.
$$
\end{lemma}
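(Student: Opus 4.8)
The plan is to prove the two inclusions separately, using the defining intersection formula for $R_{\mathbf{V}}(X)$ together with the universal mapping property of $F_{\mathbf{V}}(X)$.

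First I would unwind notation: write $n = |X|$ (tacitly assuming $X$ finite, or else interpreting the universal sentence as quantifying over whatever variables occur in $p,q$), and recall that $\mathbf{V}\vDash(\forall x_1\ldots\forall x_n\, p\approx q)$ means that for every $A\in\mathbf{V}$ and every tuple $\overline{a}\in A^n$ one has $p^A(\overline{a})=q^A(\overline{a})$. For the inclusion $R_{\mathbf{V}}(X)\subseteq\{(p,q):\mathbf{V}\vDash\forall\,p\approx q\}$, take $(p,q)\in R_{\mathbf{V}}(X)$ and an arbitrary $A\in\mathbf{V}$ with a tuple $\overline{a}=(a_1,\dots,a_n)$. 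By the universal mapping property there is a homomorphism $\varphi\colon F_{\mathbf{V}}(X)\to A$ sending $x_i/R_{\mathbf{V}}(X)$ to $a_i$; equivalently, the substitution map $T_{\mathcal{L}}(X)\to A$, $x_i\mapsto a_i$, factors through $T_{\mathcal{L}}(X)/R_{\mathbf{V}}(X)$ because its kernel is a congruence $R$ with $T_{\mathcal{L}}(X)/R$ embeddable in (hence, as $\mathbf{V}$ is a pre-variety closed under subalgebras, lying in) $\mathbf{V}$, so $R\supseteq R_{\mathbf{V}}(X)$. Since $(p,q)\in R_{\mathbf{V}}(X)\subseteq R$, we get $p^A(\overline{a})=q^A(\overline{a})$. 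As $A$ and $\overline{a}$ were arbitrary, $\mathbf{V}\vDash\forall\,p\approx q$.

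For the reverse inclusion, suppose $\mathbf{V}\vDash(\forall x_1\ldots\forall x_n\, p\approx q)$. By definition $R_{\mathbf{V}}(X)=\bigcap\{R\in\mathrm{Cong}(T_{\mathcal{L}}(X)):T_{\mathcal{L}}(X)/R\in\mathbf{V}\}$, so it suffices to show $(p,q)\in R$ for each such $R$. Fix one: let $B=T_{\mathcal{L}}(X)/R\in\mathbf{V}$ and let $b_i = x_i/R$. Applying the hypothesis to the algebra $B$ and the tuple $\overline{b}$ gives $p^B(\overline{b})=q^B(\overline{b})$; but $p^B(\overline{b}) = p/R$ and $q^B(\overline{b}) = q/R$ by the way term functions are computed in a term-algebra quotient, so $p/R = q/R$, i.e. $(p,q)\in R$. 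Intersecting over all such $R$ yields $(p,q)\in R_{\mathbf{V}}(X)$.

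The only genuinely delicate point is the claim that the kernel of a substitution homomorphism $T_{\mathcal{L}}(X)\to A$ with $A\in\mathbf{V}$ is one of the congruences $R$ in the intersection: this uses that its image is a subalgebra of $A$, that a pre-variety is closed under subalgebras, and the homomorphism theorem identifying the image with $T_{\mathcal{L}}(X)/\ker$. Once that is in place both directions are essentially the evaluation identities $p^{T/R}(x_1/R,\dots) = p/R$, which is a routine induction on term structure and can be cited from \cite{BS}. I would present the argument with that lemma on substitutions stated explicitly, since it is exactly the bridge between the order-theoretic description of $R_{\mathbf{V}}(X)$ and the model-theoretic one.
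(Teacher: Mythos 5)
Your proof is correct, and one of the two inclusions is handled in a genuinely different way than in the paper. Write $K=\{(p,q):\mathbf{V}\vDash\forall\overline{x}\,p\approx q\}$. For $K\subseteq R_{\mathbf{V}}(X)$ you argue exactly as the paper does: for each congruence $R$ with $T_{\mathcal L}(X)/R\in\mathbf{V}$, evaluate the identity at the tuple $(x_1/R,\dots,x_n/R)$ and read off $p/R=q/R$, hence $(p,q)\in R$, and intersect. For $R_{\mathbf{V}}(X)\subseteq K$, however, the paper forms $F=T_{\mathcal L}(X)/K$, verifies that $F$ has the universal mapping property with respect to $\mathbf{V}$, and then asserts that $F$ is therefore the relatively free algebra and belongs to $\mathbf{V}$, which yields $R_{\mathbf{V}}(X)\subseteq K$ because $R_{\mathbf{V}}(X)$ is the least congruence with quotient in $\mathbf{V}$. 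You instead avoid the whole construction: for an arbitrary $A\in\mathbf{V}$ and $\overline{a}\in A^n$, the evaluation homomorphism $\sigma\colon T_{\mathcal L}(X)\to A$, $x_i\mapsto a_i$, has $T_{\mathcal L}(X)/\ker\sigma\cong\operatorname{Im}\sigma\leq A\in\mathbf{V}$, so $\ker\sigma$ is one of the congruences in the defining intersection, giving $R_{\mathbf{V}}(X)\subseteq\ker\sigma$ and thus $p^A(\overline{a})=q^A(\overline{a})$ for every $(p,q)\in R_{\mathbf{V}}(X)$.

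The trade-off: the paper's route establishes, as a byproduct, that $T_{\mathcal L}(X)/K$ is itself the $\mathbf{V}$-free algebra on $\overline X$, which is a useful fact in its own right; but it leans on the step that possession of the universal mapping property forces membership in $\mathbf{V}$, a step that really also requires noticing that $T_{\mathcal L}(X)/K$ subdirectly embeds into a product of algebras in $\mathbf{V}$ (closure under $S$ and $P$). Your argument reaches the desired inclusion directly from closure under subalgebras alone and is closer to being a one-liner. Both are correct; yours is the more economical proof of the lemma as stated, while the paper's proof does extra work that it then uses implicitly elsewhere. The parenthetical in your first paragraph about infinite $X$ is the right caveat and matches the paper's own remark that the finite case is what is actually proved.
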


\begin{proof}
We prove the assertion for finite $X$ and by a small modification, it can be proved for infinite set of variables. Note that if $p$ and $q$
are terms with variables $x_1, \ldots, x_n$, then $\mathbf{V}\vDash (\forall x_1\ldots \forall x_n p\approx q)$ means that for all $A\in \mathbf{V}$
and all $a_1, \ldots, a_n\in A$, we have the equality
$$
p^A(a_1, \ldots, a_n)=q^A(a_1, \ldots, a_n).
$$
To prove the lemma, assume that
$$
K=\{ (p, q):\ V\vDash (\forall x_1\ldots \forall x_n p\approx q)\}.
$$
Let $R$ be an ideal in $T_{\mathcal{L}}(X)$ such that $T_{\mathcal{L}}(X)/R\in \mathbf{V}$ and let $(p,q)\in K$. If we let $A=T_{\mathcal{L}}(X)/R$,
then
$$
p^A(x_1/R, \ldots, x_n/R)=q^A(x_1/R, \ldots, x_n/R),
$$
where $x/R$ denotes the class containing $x$. This equality is equivalent to $p/R=q/R$, so $(p,q)\in R$. This proves that $K\subseteq R_\mathbf{V}$.

To see the inverse inclusion, let $F=T_{\mathcal{L}}(X)/K$, which is generated by the set $X^{\ast}=\{x/K:\ x\in X\}$. We show that $F$ has the
universal mapping property with respect to the set $X^{\ast}$ and the pre-variety $\mathbf{V}$. Let $\alpha: X^{\ast}\to A$ be any map,
where $A\in \mathbf{V}$. Define $\alpha_0:X\to A$ by $\alpha_0(x)=\alpha(x/K)$. We know that there exists a homomorphism
$\alpha_0^{\prime}:T_{\mathcal{L}}(X)\to A$, extending $\alpha_0$. It is easy to see that for all term $p\in T_{\mathcal{L}}(X)$, we have
$\alpha_0^{\prime}(p)=p^A(\alpha(x_1/K), \ldots, \alpha(x_n/K))$. This shows that for $(p,q)\in K$, we have $\alpha_0^{\prime}(p)=\alpha_0^{\prime}(q)$,
and hence $(p,q)\in \ker \alpha_0^{\prime}$. Therefore we have a homomorphism $\alpha^{\prime}:F\to A$ such that
$$
\alpha^{\prime}(t/K)=\alpha_0^{\prime}(t).
$$
Clearly, $\alpha^{\prime}$ coincides with $\alpha$ over $X^{\ast}$. We show that $\alpha^{\prime}$ is unique. Let $h:F\to A$ be another
homomorphism such that $h$ coincides with $\alpha$ over $X^{\ast}$. Using induction on the complexity of the term $t=f(t_1, \ldots, t_m)$, we have
\begin{eqnarray*}
h(t/K)&=& h(\frac{f(t_1, \ldots, t_m)}{K})\\
      &=&f^A(h(t_1/K), \ldots, h(t_m/K))\\
      &=&f^A(\alpha^{\prime}(t_1/K), \ldots, \alpha^{\prime}(t_m/K))\\
      &=&\alpha^{\prime}( \frac{f(t_1, \ldots, t_m)}{K})\\
      &=&\alpha^{\prime}(t/K).
\end{eqnarray*}
This argument shows that $F$ is free relative to $\mathbf{V}$ and hence it belongs to $\mathbf{V}$. Therefore $R_{\mathbf{V}}(X)\subseteq K$.
\end{proof}

We give another interpretation of this lemma using the terminology of the equational logic of Tarski. Recall that a congruence of the term algebra
$T_{\mathcal{L}}(x_1, x_2, \ldots)$ is called {\em fully invariant}, if it is invariant under any endomorphism of the term algebra.
For any set $\Sigma$ of identities, $\Theta_{fi}(\Sigma)$ denotes the fully invariant closure of $\Sigma$. As in \cite{BS}, this set is equal to
the {\em deductive closure} of $\Sigma$, i.e.
$$
\Theta_{fi}(\Sigma)=D(\Sigma).
$$
Now, let $\mathbf{V}$ be a variety of algebras in the language $\mathcal{L}$. Let $\Sigma$ be a set of identities for
$\mathbf{V}$ with variables from $X$. Then the above lemmas says that $R_{\mathbf{V}}(X)=D(\Sigma)$. The next result will be used  in the
subsequence parts of this article.

\begin{corollary}
Let $\mathbf{V}$ be a pre-variety of algebras in a language $\mathcal{L}$ and $X$ be a set. Let $F=F_{\mathbf{V}}(X)$ and $p\approx q$ be an equation
with $n$ indeterminate. Then $(\overline{t}_1, \ldots, \overline{t}_n)\in F^n$ is a solution of $p\approx q$, if and only if
$$
\forall x_1\ldots \forall x_m\ p(t_1, \ldots, t_n)\approx q(t_1, \ldots, t_n)
$$
is an identity in $\mathbf{V}$. Here $x_1, \ldots, x_m$ are the variables appearing in the terms $t_1, \ldots, t_n$.
\end{corollary}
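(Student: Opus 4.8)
The plan is to reduce the statement directly to the preceding Lemma, by identifying a solution of $p\approx q$ in $F$ with the membership of an explicit pair in the congruence $R_{\mathbf{V}}(X)$. Write $R=R_{\mathbf{V}}(X)$, let $y_1,\ldots,y_n$ denote the indeterminates of the equation $p\approx q$, and set $\overline{t}_i=t_i/R$ for chosen representatives $t_i\in T_{\mathcal{L}}(X)$. First I would record the substitution identity: for any term $p$ in $y_1,\ldots,y_n$ and any $t_1,\ldots,t_n\in T_{\mathcal{L}}(X)$,
$$
p^{F}(t_1/R,\ldots,t_n/R)=p(t_1,\ldots,t_n)/R,
$$
where $p(t_1,\ldots,t_n)$ is the term obtained by substituting $t_i$ for $y_i$. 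This is proved by a routine induction on the complexity of $p$, using only that the operations of the quotient $F=T_{\mathcal{L}}(X)/R$ are computed on representatives; it is the single computational step, and it is elementary.

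Granting this, $(\overline{t}_1,\ldots,\overline{t}_n)$ is a solution of $p\approx q$ over $F$ exactly when $p^{F}(\overline{t}_1,\ldots,\overline{t}_n)=q^{F}(\overline{t}_1,\ldots,\overline{t}_n)$, which by the displayed identity is equivalent to $p(t_1,\ldots,t_n)/R=q(t_1,\ldots,t_n)/R$, i.e. to
$$
\big(p(t_1,\ldots,t_n),\,q(t_1,\ldots,t_n)\big)\in R_{\mathbf{V}}(X).
$$
Now I would apply the Lemma: this membership holds if and only if $\mathbf{V}\vDash\forall x_1\ldots\forall x_m\,p(t_1,\ldots,t_n)\approx q(t_1,\ldots,t_n)$, where $x_1,\ldots,x_m$ are the variables of $X$ occurring in $t_1,\ldots,t_n$, and hence exactly the variables occurring in the two composite terms $p(t_1,\ldots,t_n)$ and $q(t_1,\ldots,t_n)$. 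Concatenating the two equivalences yields precisely the statement of the corollary.

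The only delicate point is the bookkeeping of variables: the Lemma is phrased with the quantifier prefix running over all of $X$ (or over the $n$ indeterminates of the original equation), whereas the corollary quantifies only over the variables $x_1,\ldots,x_m$ actually present in $t_1,\ldots,t_n$. I expect this to be the main obstacle, though a minor one: it is disposed of by the standard semantic observation that whether $\mathbf{V}$ satisfies an identity is unaffected by adjoining or deleting universal quantifiers over variables that do not occur in it. Beyond this, no idea is needed that is not already contained in the Lemma.
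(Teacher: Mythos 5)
Your proposal is correct and follows essentially the same route as the paper's proof: translate ``solution'' into the equality $p(t_1,\ldots,t_n)/R_{\mathbf{V}}(X)=q(t_1,\ldots,t_n)/R_{\mathbf{V}}(X)$ via the substitution identity for term functions on a quotient algebra, and then invoke the preceding Lemma to convert membership of the pair $\bigl(p(t_1,\ldots,t_n),q(t_1,\ldots,t_n)\bigr)$ in $R_{\mathbf{V}}(X)$ into the statement that $\forall x_1\ldots\forall x_m\, p(t_1,\ldots,t_n)\approx q(t_1,\ldots,t_n)$ is an identity of $\mathbf{V}$. The paper treats the substitution step and the quantifier-prefix bookkeeping as implicit; you make both explicit, which is a matter of detail rather than method.
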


\begin{proof}
Suppose $(\overline{t}_1, \ldots, \overline{t}_n)\in F^n$ is a solution of $p\approx q$. Then we have
$$
p^F(\overline{t}_1, \ldots, \overline{t}_n)=q^F(\overline{t}_1, \ldots, \overline{t}_n),
$$
and therefore
$$
\frac{p(t_1, \ldots, t_n)}{R_{\mathbf{V}}(X)}=\frac{q(t_1, \ldots, t_n)}{R_{\mathbf{V}}(X)}.
$$
This shows that $(p(t_1, \ldots, t_n), q(t_1, \ldots, t_n))\in R_{\mathbf{V}}(X)$ and hence by the above lemma
$$
\mathbf{V}\vDash \forall x_1\ldots\forall x_m  p(t_1, \ldots, t_n)\approx q(t_1, \ldots, t_n).
$$
The converse statement can be proved similarly.
\end{proof}

\begin{corollary}
Let $\mathbf{V}$ be a variety, $X=\{ x_1, \ldots, x_n\}$ and $F=F_{\mathbf{V}}(X)$. Then
$$
\mathrm{Rad}(\overline{x}_1, \ldots, \overline{x}_n)=\mathrm{Id}_X(\mathbf{V}),
$$
where $\mathrm{Id}_X(\mathbf{V})$ denotes the set of identities of $\mathbf{V}$ with $X$ as the set of variables.
\end{corollary}

\subsection{Finitely axiomatizable classes}
We can apply the property of being equational noetherian for certain relatively free algebras, to obtain finite bases of axioms
(consisting of identities) for some classes of algebras. For definition of Horn class, see \cite{BS}. In the next theorem, $\mathrm{Horn}(A)$ is used for the Horn class generated by $A$, i.e. the class of all models of Horn theory of $A$. Note that it is well-known that this class is equal to $P_f(A)$, where $P_f$ denotes the filter product. 

\begin{theorem}
Let $A$ be an algebra of type $\mathcal{L}$ and $\mathbf{V}=\mathrm{Var}(A)$. Let $F_{\mathbf{V}}(X)$ be equational noetherian for all finite $X$.
Suppose $\mathbf{W}$ is a subclass of $\mathrm{Horn}(A)$ axiomatized by  a set of identities $\Sigma\subseteq At_{\mathcal{L}}(x_1, \ldots, x_n)$ with inside the class $\mathrm{Horn}(A)$.
Then there exists a finite subset $\Sigma_0\subseteq \Sigma$ which axiomatizes $\mathbf{W}$ inside $\mathrm{Horn}(A)$, i.e.
$$
\mathbf{W}=\{ B\in \mathrm{Horn}(A):\ B\vDash \Sigma_0\}.
$$
\end{theorem}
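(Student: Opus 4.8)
The plan is to pull the whole statement back to the $n$-generated relatively free algebra of $\mathbf{V}$ and exploit its equational noetherianity. Write $X=\{x_1,\dots,x_n\}$ and $F=F_{\mathbf V}(X)$, which is equational noetherian by hypothesis. Since $\Sigma\subseteq At_{\mathcal L}(x_1,\dots,x_n)=T_{\mathcal L}(X)\times T_{\mathcal L}(X)$, it is literally a system of equations over $F$ in the variables $x_1,\dots,x_n$, so equational noetherianity yields a finite $\Sigma_0\subseteq\Sigma$ with $V_F(\Sigma_0)=V_F(\Sigma)$; equivalently $\mathrm{Rad}_F(\Sigma_0)=\mathrm{Rad}_F(\Sigma)$, and in particular $V_F(\Sigma_0)\subseteq V_F(p\approx q)$ for every $(p,q)\in\Sigma$ (because $\Sigma\subseteq[\Sigma]\subseteq\mathrm{Rad}_F(\Sigma)$). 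The candidate finite axiom set is (a suitable enlargement of) this $\Sigma_0$.

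The inclusion $\mathbf W\subseteq\{B\in\mathrm{Ucl}(A):B\vDash\Sigma_0\}$ is immediate from $\Sigma_0\subseteq\Sigma$. For the reverse inclusion fix $B\in\mathrm{Ucl}(A)$ with $B\vDash\Sigma_0$ and an equation $(p,q)\in\Sigma$; we must show $B\vDash\forall x_1\dots\forall x_n\,p\approx q$. Since this sentence and the members of $\Sigma_0$ involve only $n$ variables and $\mathrm{Ucl}(A)$ is closed under subalgebras, it suffices to treat the case where $B$ is generated by $n$ elements $b_1,\dots,b_n$. As $\mathrm{Ucl}(A)\subseteq\mathrm{Var}(A)=\mathbf V$, the universal mapping property of $F$ gives a surjection $\varphi\colon F\to B$ with $\varphi(\overline{x}_i)=b_i$, so $B$ is a quotient of $F$ (indeed of $F_{\mathbf V\cap\mathrm{Mod}(\Sigma_0)}(X)$, since $B\vDash\Sigma_0$).

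The heart of the argument is to convert $V_F(\Sigma_0)\subseteq V_F(p\approx q)$ into a statement that survives passage to all of $\mathrm{Ucl}(A)$. By Corollary 1, a point $(\overline{t}_1,\dots,\overline{t}_n)\in F^n$ lies in $V_F(S)$ precisely when $\mathbf V\vDash\forall x_1\dots\forall x_m\,s(t_1,\dots,t_n)\approx s'(t_1,\dots,t_n)$ for every $(s,s')\in S$; hence $V_F(\Sigma_0)\subseteq V_F(p\approx q)$ is exactly the assertion that $F$ satisfies the quasi-identity $\psi:\ \forall x_1\dots\forall x_n\bigl(\bigwedge_{(p_j,q_j)\in\Sigma_0}p_j\approx q_j\ \Rightarrow\ p\approx q\bigr)$. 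Running the same extraction inside $F_{\mathbf V}(Y)$ for larger finite $Y$ (again equational noetherian) and invoking $\mathbf V=\mathrm{Var}(A)$, one wants to promote this to $A\vDash\psi$; then, $\psi$ being universal, $\mathrm{Ucl}(A)\vDash\psi$, and since $B\in\mathrm{Ucl}(A)$ satisfies $\Sigma_0$ at every tuple, the implication $\psi$ forces $B\vDash\forall\bar x\,p\approx q$. Doing this for all $(p,q)\in\Sigma$ gives $B\vDash\Sigma$, i.e.\ $B\in\mathbf W$.

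The step I expect to be the real obstacle is precisely this last promotion. It is false in general that $F_{\mathbf V}(X)\vDash\psi$ implies $A\vDash\psi$ for quasi-identities $\psi$: for instance $\mathbb Z=F_{\mathrm{ab}}(x_1)$ satisfies $\forall x(2x\approx 0\Rightarrow x\approx 0)$ while $\mathbb Z\times\mathbb Z/2\in\mathrm{Var}(\mathbb Z)$ does not, so a crude choice of $\Sigma_0$ (any finite subsystem $F$-equivalent to $\Sigma$) need not axiomatize $\mathbf W$ over $\mathrm{Ucl}(A)$. The fix is to choose $\Sigma_0$ more carefully: for each $(p,q)\in\Sigma$, compactness of first-order logic yields a finite $\Delta_{p,q}\subseteq\Sigma$ with $\mathrm{Th}_\forall(A)\cup\Delta_{p,q}\vDash\forall\bar x\,p\approx q$, and one must use the equational noetherianity of the $F_{\mathbf V}(Y)$ — via the descending chain condition on their Zariski topologies, equivalently an ascending chain condition on the corresponding radical ideals of $T_{\mathcal L}(x_1,\dots,x_n)$ — to see that only finitely many $\Delta_{p,q}$ are genuinely needed, so that a finite union of them serves as $\Sigma_0$. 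Matching the noetherian condition on the relatively free algebras against the family of theories $\mathrm{Th}_\forall(A)\cup\Delta$ is the crux, and is where I would concentrate the technical work.
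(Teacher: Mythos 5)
Your approach tracks the paper's exactly for the first half: pull $\Sigma$ back to $F=F_{\mathbf V}(x_1,\ldots,x_n)$ as a system of equations, use equational noetherianity to extract a finite $\Sigma_0$ with $V_F(\Sigma_0)=V_F(\Sigma)$, and translate via Corollary~1. You have also correctly located the crux at the transfer step. At that point the paper takes the special case $t_i=x_i$ in Corollary~1 to obtain the implication between the \emph{closed} sentences $A\vDash\bigwedge_{i\in I_0}\forall\bar x\,p_i\approx q_i \ \Longrightarrow\ A\vDash\bigwedge_{i\in I}\forall\bar x\,p_i\approx q_i$, and then asserts that, for each $j$, the sentence $\bigl(\bigwedge_{i\in I_0}\forall\bar x\,p_i\approx q_i\bigr)\Rightarrow\forall\bar x\,p_j\approx q_j$ is a universal sentence and therefore lies in $\mathrm{Th}_{\forall}(A)$. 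That last assertion is a genuine gap: the prenex normal form of $(\forall\bar x\,\varphi)\Rightarrow(\forall\bar y\,\psi)$ is $\exists\bar x\,\forall\bar y\,(\neg\varphi\vee\psi)$, an $\exists\forall$-sentence, and such sentences are not preserved under passing to subalgebras, so there is no reason they should hold throughout $\mathrm{Ucl}(A)=SP_u(A)$. Thus the obstacle you describe (the quasi-identity $\forall\bar x\,(\bigwedge_{\Sigma_0}p_i\approx q_i\Rightarrow p_j\approx q_j)$ holds in $F$ but need not hold in $A$, much less in all of $\mathrm{Ucl}(A)$) is a real one, and the paper's own escape route does not avoid it.

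Your proposed repair, however, does not close the gap either. The compactness step claiming a finite $\Delta_{p,q}\subseteq\Sigma$ with $\mathrm{Th}_{\forall}(A)\cup\Delta_{p,q}\vDash\forall\bar x\,p\approx q$ is vacuous, since $\Delta_{p,q}=\{p\approx q\}$ already works; compactness gives nothing here. What is actually needed is a single finite $\Sigma_0\subseteq\Sigma$ with $\mathrm{Th}_{\forall}(A)\cup\Sigma_0\vDash\sigma$ for \emph{every} $\sigma\in\Sigma$, a uniformity that neither compactness nor your unspecified ``matching'' of the family $\{\Delta_{p,q}\}$ against the chain conditions on $T_{\mathcal L}(x_1,\ldots,x_n)$ produces. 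So the diagnosis is sharp and, in fact, exposes a flaw in the published argument, but the proposal as written does not establish the theorem.
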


\begin{proof}
Suppose $\Sigma=\{ p_i\approx q_i:\ i\in I\}$, so we have
$$
\mathbf{W}=\{ B\in \mathrm{Horn}(A):\ B\vDash \bigwedge_{i\in I}\forall x_1\ldots\forall x_n p_i\approx q_i\}.
$$
Let $X=\{ x_1, \ldots, x_n\}$ and $F=F_{\mathbf{V}}(X)$. We can consider $\Sigma$ as a system of equations over $F$ and since $F$ is equational
noetherian, so there exists a finite $\Sigma_0\subseteq \Sigma$ such that $V_F(\Sigma)=V_F(\Sigma_0)$. Let $I_0$ be the corresponding set of indices,
i.e.
$$
\Sigma_0=\{ p_i\approx q_i:\ i\in I_0\}.
$$
Now, using the corollary 1 in 4.1, for any $t_1, \ldots, t_n$, we have
$$
\mathbf{V}\vDash \bigwedge_{i\in I_0}\forall x_1\ldots\forall x_n p_i(t_1, \ldots, t_n)\approx q_i(t_1, \ldots, t_n),
$$
if and only if
$$
\mathbf{V}\vDash \bigwedge_{i\in I}\forall x_1\ldots\forall x_n p_i(t_1, \ldots, t_n)\approx q_i(t_1, \ldots, t_n).
$$
Since $\mathbf{V}=\mathrm{Var}(A)$, so as a special case we have
$$
A\vDash( \bigwedge_{i\in I_0}\forall x_1\ldots\forall x_n p_i\approx q_i\Rightarrow \bigwedge_{i\in I}\forall x_1\ldots\forall x_n p_i\approx q_i).
$$
This shows that for any $j\in I$, the Horn sentence
$$
\bigwedge_{i\in I_0}\forall x_1\ldots\forall x_n p_i\approx q_i\Rightarrow \forall x_1\ldots\forall x_n p_j\approx q_j
$$
belongs to the Horn theory of $A$. Therefore
$$
\mathrm{Th}_{Horn}(A)+(\bigwedge_{i\in I_0}\forall x_1\ldots\forall x_n p_i\approx q_i)\vDash \forall x_1\ldots\forall x_n p_j\approx q_j,
$$
and hence
$$
\mathbf{W}=\{ B\in \mathrm{Horn}(A):\ B\vDash \bigwedge_{i\in I_0}\forall x_1\ldots\forall x_n p_i\approx q_i\},
$$
therefore $\Sigma_0$ is a set of axioms for $\mathbf{W}$ inside $\mathrm{Horn}(A)$.
\end{proof}

In the following example, we use the fact that every variety is generated by any of its  infinitely generated relatively free elements.
We also use the fact that the free group of the rank two, contains a free group of infinite rank as a subgroup.

\begin{example}
Let $\mathbf{V}$ be the variety of all groups. Clearly $\mathbf{V}=\mathrm{Var}(F_2)$, where $F_2$ is the free group of rank two.
Let $n\geq 1003$ be an odd number and consider the following set of group identities
$$
\Sigma=\{ [x^{pn},y^{pn}]^n\approx 1:\ p=prime\}.
$$
Let $\mathbf{W}_1$ be the variety of groups axiomatized by $\Sigma$. Then, as Adian proves in \cite{Ad}, $\mathbf{W}_1$ is not finitely based,
i.e. it is impossible to axiomatize it using a finite set of identities. Now, suppose
$$
\mathbf{W}=\{ B\in \mathrm{Horn}(F_2):\ B\vDash \Sigma\}=\mathbf{W}_1\cap \mathrm{Horn}(F_2).
$$
Since for any finite $X$, the free group $F(X)$ is equational noetherian, so by the above theorem $\mathbf{W}$ can be axiomatized by a finite subset
of $\Sigma$. This means that there are prime numbers $p_1, \ldots, p_m$ such that
$$
\mathrm{Th}_{Horn}(F_2)+(\bigwedge_{i=1}^m\forall x\forall y[x^{p_in},y^{p_in}]^n\approx 1)\vdash \Sigma.
$$
Hence, although $\Sigma$ is  independent over $\mathrm{Id}(F_2)$, it is not so over the Horn theory of $F_2$.
\end{example}

\section{Some equational noetherian groups}

This section is marginal and it contains some  results on equational noetherian groups. For the sake of generality, we consider the equations with coefficients from a fixed group $A$.

We assume that $A$ is an arbitrary group. An $A$-group is a pair $(G, \lambda)$, where $G$ is a group and $\lambda: A\to G$ is an embedding. If there is no risk of confusion, we will say that $G$ is an $A$-group, and so it contains a distinguished copy of $A$. Let $\mathcal{L}=(\cdot, ^{-1}, 1)$ be the language of groups and for any $a\in A$ attach
a constant symbol $a$ to $\mathcal{L}$. As usual, we denote the extended language by $\mathcal{L}(A)$ and so every $A$-group $G$ becomes an algebra of type $\mathcal{L}(A)$, if we interpret $a$ as $\lambda(a)$. Note that any congruence of $G$ is in fact a normal subgroup $K$ with the property $A\cap K=1$. Through this section, we will call such a normal subgroup an $A$-ideal. We say that $G$ is noetherian if it has maximal condition on the set of $A$-ideals, i.e. any ascending chain of $A$-ideals terminates.

For a set $X=\{ x_1, \ldots, x_n\}$ the free $A$-group generated by $X$ is the free product $A[X]=A\ast F[X]$, where $F[X]$ is the ordinary free group on $X$. We will assume that the embedding $A\hookrightarrow A[X]$ is the inclusion map. Any subset $S\subseteq A[X]$ corresponds to a  system of equations, and if $w=w(x_1,\ldots, x_n, a_1, \ldots, a_m)\in A[X]$ then the expression $w\approx 1$ is an equation with  coefficient $a_1,\ldots, a_m\in A$. Let $(G, \lambda)$ be an $A$-group. We say that $\overline{g}=(g_1, \ldots, g_n )\in G^n$ is a solution for this equation if
$$
w(g_1,\ldots, g_n, \lambda(a_1), \ldots, \lambda(a_m))=1.
$$
For convenience, we will write the above equality as
$$
w(g_1,\ldots, g_n, a_1, \ldots, a_m)=1.
$$
As usual, we denote by $V_G(S)$ the algebraic set corresponding to $S$.

\begin{theorem}
Assume that  $\mathbf{V}$ is variety of $A$-groups. Then all elements of $\mathbf{V}$ are equational noetherian if and only if for all finite $X$, the relatively free $A$-group $F_{\mathbf{V}}(X)$ is noetherian.
\end{theorem}

\begin{proof}
An arbitrary element $wR_{\mathbf{V}}(X)$ in $F_{\mathbf{V}}(X)$ will be denoted by $\overline{w}$. Let $H$ be an element of $\mathbf{V}$ and $h_1, \ldots, h_n\in H$. We define a homomorphism $\varphi:F_{\mathbf{V}}(X)\to H$ by
$$
\varphi(\overline{w})=w(h_1, \ldots, h_n).
$$
Note that this is actually a well-defined map, indeed if $\overline{w}_1=\overline{w}_2$, then $w_1^{-1}w_2\in  R_{\mathbf{V}}(X)$ and so $w_1^{-1}w_2\approx 1$ is an identity in $\mathbf{V}$. Hence, we have
$$
w_1(h_1, \ldots, h_n)=w_2(h_1, \ldots, h_n).
$$
We say that $(h_1, \ldots, h_n)$ is a solution of $\overline{w}\approx 1$ if $\varphi(\overline{w})=1$. For a subset $S\subseteq F_{\mathbf{V}}(X)$, we define
$$
V_H(S)=\{ (h_1, \ldots, h_n)\in H^n:\ \forall \overline{w}\in S\ \  w(h_1, \ldots, h_n)=1\}.
$$
Note that although $S$ is not an ordinary system of equations here, $V_H(S)$ is an ordinary algebraic set in $H^n$.

Now we prove the theorem. The noetherianity of $F_{\mathbf{V}}(X)$ means the $\mathrm{max}$ property for $A$-ideals. So, first let for any finite $X$, the group $F_{\mathbf{V}}(X)$ has $\mathrm{max}$ on $A$-ideals. Let $H\in \mathbf{V}$ and $S\subseteq A[X]$. Let $R$ be the normal closure of $S$ in $A[X]$. Clearly $V_H(S)=V_H(R)$, indeed every element of $R$ has the form $\prod_{i=1}^Nu_iw_i^{\pm 1}u_i^{-1}$, where $w_i\in S$ and $u_i\in A[X]$. We will prove that there is a finite subset $R_0\subseteq R$ such that $V_H(R)=V_(R_0)$. Assume that we done. Let
$$
R_0=\{ v_1, \ldots, v_k\}.
$$
Then for any $i$, we have
$$
v_i=\prod_{j=1}^{N_i}u_{ij}w_{ij}^{\pm 1}u_{ij}^{-1},
$$
with $u_{ij}\in A[X]$ and $w_{ij}\in S$. Let
$$
S_0=\{ w_{ij}:\ 1\leq i\leq k, \ 1\leq j\leq N_i\}.
$$
Then $S_0\subseteq S$ and $V_H(S)=V_H(S_0)$. Therefore, we are going to prove the existence of $R_0$. Let
$$
\overline{S}=\{ \overline{w}\in F_{\mathbf{V}}(X): w\in S\}.
$$
Assume that $\overline{R}$ is the normal closure of $\overline{S}$ in $F_{\mathbf{V}}(X)$. Note that we have
$$
\overline{R}=\{ \overline{w}:\ w\in R\}.
$$
On the other hand we have
$$
V_H(S)=V_H(R)=V_H(\overline{R})=V_H(\overline{S}).
$$
Now, there are two cases:\\

i) $A\cap \overline{R}\neq 1$. Then there exists an element $1\neq a\in A\cap \overline{R}$. Note that $\overline{a}\neq 1$ and so $V_H(\overline{R})=\emptyset$. Hence we can put $R_0=\{ a\}\subseteq R$ in this case.\\

ii) $A\cap \overline{R}=1$. In this case $\overline{R}$ is an $A$-ideal of $F_{\mathbf{V}}(X)$ and so it is finitely generated as an $A$-ideal. Hence there exists a finite $\overline{R}_0\subseteq \overline{R}$, generating $\overline{R}$. We have $V_H(\overline{R})=V_H(\overline{R}_0)$. Suppose $R_0$ is a set of pre-images of elements of $\overline{R}_0$. Then $V_H(\overline{R}_0)=V_H(R_0)$ and so $V_H(R)=V_H(R_0)$. This proves that $H$ is equational noetherian.

Now assume that every element of $\mathbf{V}$ is equational noetherian. Let $X=\{ x_1, \ldots, x_n\}$ be a finite set. We prove that $F_{\mathbf{V}}(X)$ is noetherian. Assume that $K$ is an arbitrary $A$-ideal in $F_{\mathbf{V}}(X)$ and $H=F_{\mathbf{V}}(X)/K$. Then $H$ is an $A$-group belonging to $\mathbf{V}$. Let $\overline{w}\in K$. Then we have
\begin{eqnarray*}
w(\overline{x}_1K, \ldots, \overline{x}_nK)&=& w(\overline{x}_1, \ldots, \overline{x}_n)K\\
                                           &=&\overline{w}K\\
                                           &=&K.
\end{eqnarray*}
This shows that the point $(\overline{x}_1K, \ldots, \overline{x}_nK)\in H^n$ belongs to $V_H(\overline{w}\approx 1)$. Conversely, if $\overline{w}\in F_{\mathbf{V}}(X)$ and $(\overline{x}_1K, \ldots, \overline{x}_nK)$ is a solution of $\overline{w}\approx 1$, then $\overline{w}\in K$. We conclude that
$$
\overline{w}\in K \Leftrightarrow (\overline{x}_1K, \ldots, \overline{x}_nK)\in V_H(\overline{w}\approx 1). \ \ (\ast)
$$
Now, assume that $K_1\varsubsetneq K_2\varsubsetneq K_3\varsubsetneq\cdots $ is a proper chain of $A$-ideals in $F_{\mathbf{V}}(X)$. For any $i$, let $\overline{w}_i\in K_{i+1}\setminus K_i$ and let $L_i$ be the normal closure of the set $K_i\cup \{ \overline{w}_i\}$. We have $K_i\varsubsetneq L_i\subseteq K_{i+1}$. Let $H_i=F_{\mathbf{V}}(X)/K_i$ and $H=\prod_{i=1}^{\infty}H_i$. By assumption $H$ is equational noetherian. For any $i$, we have $H_i\leq H$ and so
$$
(\overline{x}_1K_i, \ldots, \overline{x}_nK_i)\in V_{H_i}(K_i)\subseteq V_H(K_i).
$$
On the other hand $(\overline{x}_1K_i, \ldots, \overline{x}_nK_i)$ does not belong to $V_H(L_i)$, since otherwise $(\overline{x}_1K_i, \ldots, \overline{x}_nK_i)\in V_{H_i}(\overline{w}_i\approx 1)$ which implies by $(\ast)$ that $\overline{w}_i\in K_i$. Hence we have
$$
V_H(K_1)\supsetneq V_H(L_1)\supseteq V_H(K_2)\supsetneq V_H(L_2)\supseteq \cdots
$$ and hence we have the following proper chain of algebraic sets in $H^n$,
$$
V_H(K_1)\supsetneq V_H(K_2)\supsetneq V_H(K_3)\supsetneq \cdots
$$
which is a contradiction.
\end{proof}

Recall that an arbitrary group $A$ is equational noetherian if and only if it is equational noetherian as an $A$-group. A well-known theorem of P. Hall says that any finitely generated  metabelian group $A$ has $\mathrm{max}-n$ property, so we can use the above theorem to prove the next result.\\

\begin{corollary}
Every finitely generated metabelian group is equational noetherian.
\end{corollary}

\begin{proof}
Let $A$ be a finitely generated metabelian group and $\mathbf{V}=Var_A(A)$ be the variety of $A$-groups generated by $A$. Since $A$ satisfies the identity $[[x_1,x_2],[x_3,x_4]]\approx 1$ and this is also an identity in the language $\mathcal{L}(A)$, so every element of $\mathbf{V}$ is metabelian. Now, for a finite set $X$, the group $F_{\mathbf{V}}(X)$ is metabelian and finitely generated since it is generated by $A\cup X$ as an ordinary group. So by  a well-known theorem of P. Hall, it has $\mathrm{max}-n$ property. Therefore, it has also $\mathrm{max}$ on $A$-ideals. So, by the above theorem, every element of $\mathbf{V}$, and specially $A$ is equational noetherian.
\end{proof}

The above result is not new, indeed it is known already by the authors of \cite{BMR1}. Using Hilbert's basis theorem, it is proved that every linear group over a noetherian ring is equational noetherian (see \cite{BMR1}). On the other hand, a theorem of Remeslennikov, says that a finitely generated metabelian group has a faithful representation over a ring which is a direct product of finitely many fields. This shows that every finitely generated metabelian group is equational noetherian. Our method is different in some features: we don't need the result of Remeslennikov, we use Theorem 4 and the above mentioned theorem of Hall. However, our proof applies again Hilbert's basis theorem, because the proof of Hall's theorem depends on it.\\

We obtain a second corollary of the theorem 4, which relates   equational noetherian $A$-group and  finitely based varieties. Note that a variety is {\em finitely based}, if it can be defined by a finite set of identities. A variety has finite {\em axiomatic rank}, if it can be defined by a finite number of variables.

\begin{corollary}
Let $\mathbf{V}$ be a variety of $A$-groups which has finite axiomatic rank. If all elements of $\mathbf{V}$ are equational noetherian, then $\mathbf{V}$ is finitely based.
\end{corollary}

\begin{proof}
Let $X=\{x_1, \ldots, x_n\}$ be the set of  variables which we need to define $\mathbf{V}$ and $R=\mathrm{Id}_X(\mathbf{V})$. It is easy to see that
$$
R=\mathrm{Rad}(\overline{x}_1, \ldots, \overline{x}_n),
$$
and so it is finitely generated as an $A$-ideal of $F_{\mathbf{V}}(X)$ (as we assumed that all elements of $\mathbf{V}$ are equational noetherian). This proves that $\mathbf{V}$ is finitely based.
\end{proof}

With some minor changes, Theorem 4 will be given also for general types of algebras in the section 6 (see subsections 6.2 and 6.3).\\

In the next theorem, we will concentrate on $A$ as an $A$-group. We give a sufficient condition under which a  group $A$ is equational noetherian. An $A$-group $G$ is called {\em finitely cogenerated}, if for any family $\{ K_i\}_{i\in I}$ of $A$-ideals, the assumption $\bigcap_{i\in I}K_i=1$ implies that there is a finite subset $I_0\subseteq I$ such that $\bigcap_{i\in I_0}K_i=1$.

\begin{theorem}
Let $A$ be a group and $\mathbf{V}=\mathrm{Var}_A(A)$ be the variety generated by $A$ as an algebra of type $\mathcal{L}(A)$. Assume that for all $m\geq 1$, all finitely generated subgroup of $A^m$ have $\mathrm{max}-n$. Assume also for all finite $X$, the group $F_{\mathbf{V}}(X)$ is finitely cogenerated. Then $A$ is equational noetherian.
\end{theorem}

\begin{proof}
Recall that $\mathbf{V}=\mathrm{Var}_A(A)$ is the variety of $A$-groups generated by $A$. For any set $X$, we have
$$
R_{\mathbf{V}}(X)=\bigcap\{ R\unlhd A[X]: A\cap R=1,\ \frac{A[X]}{R}\hookrightarrow A\},
$$
and
$$
F_{\mathbf{V}}(X)=\frac{A[X]}{R_{\mathbf{V}}(X)}.
$$

Now assume that
$$
\mathcal{C}=\{ R\unlhd A[X]: A\cap R=1,\ \frac{A[X]}{R}\hookrightarrow A\}.
$$
Let $R\in \mathcal{C}$. Then we have $R/R_{\mathbf{V}}(X)\unlhd F_{\mathbf{V}}(X)$ and this is an $A$-ideal since $A\cap R=1$. Now we have
$$
\bigcap_{R\in \mathcal{C}}\frac{R}{R_{\mathbf{V}}(X)}=1,
$$
so by the assumption of finitely cogeneratedness of $F_{\mathbf{V}}(X)$, there are finitely many elements $R_1, \ldots, R_m$ in $\mathcal{C}$ such that
$$
R_{\mathbf{V}}(X)=\bigcap_{i=1}^m R_i.
$$
Hence we have
$$
F_{\mathbf{V}}(X)=\frac{A[X]}{\bigcap_{i=1}^mR_i}\hookrightarrow \prod_{i=1}^m\frac{A[X]}{R_i}\hookrightarrow A^m.
$$
By the assumption, $F_{\mathbf{V}}(X)$ has $\mathrm{max}-n$, so it has also $\mathrm{max}$ on $A$-ideals. Hence every element of $\mathbf{V}$, specially $A$ itself, is equational noetherian.
\end{proof}

If we apply this theorem for the case of a locally finite group $A$, then the first assumption on the finitely generated subgroups of $A^m$ will be automatically fulfilled. So we obtain our last result.

\begin{corollary}
Let $A$ be a locally finite group and $\mathbf{V}=\mathrm{Var}_A(A)$ be the variety generated by $A$ as an algebra of type $\mathcal{L}(A)$. Assume also for all finite $X$, the group $F_{\mathbf{V}}(X)$ is finitely cogenerated. Then $A$ is equational noetherian.
\end{corollary}

\begin{proof}
Let $\mathbf{V}=\mathrm{Var}_A(A)$. Since $A$ is locally finite, so for all $m\geq 1$, every subgroup of $A^m$ is finite so it has $\mathrm{max}-n$. Now, by the assumption, for all finite $X$, the group $F_{\mathbf{V}}(X)$ is finitely cogenerated. So we  conclude that $A$ is equational noetherian.
\end{proof}

\section{Equational Artinian algebras}
We say that an algebra $A$ is {\em equational Artinian} if every
ascending chain of algebraic sets over $A$ terminates. In this
section, we investigate some properties of equational Artinian
algebras.

\subsection{ $A$-radicals }

One can ask about the existence of  an equational condition, equivalent to being equational Artinian. In this subsection, we will show that the correct condition is not in terms of equations, but rather it can be formulated in terms of radical ideals. We will prove that  $A$ is equational Artinian, iff for any $n$ and $E\subseteq A^n$, there exists a finite subset $E_0\subseteq E$ such that
$$
\mathrm{Rad}(E)=\mathrm{Rad}(E_0).
$$
Note that this condition is in some sense the dual condition of being equational noetherian. First we recall a definition.

\begin{definition}
Let $A$ be an algebra and $E\subseteq A^n$, for some $n$. Then $\mathrm{Rad}(E)$ is  called an $A$-radical ideal of the term algebra
$T_{\mathcal{L}}(x_1, \ldots, x_n)$.
\end{definition}

\begin{theorem}
For an algebra $A$, the following conditions are equivalent;\\

i-  For any $n$ and $E\subseteq A^n$, there exists a finite subset $E_0\subseteq E$ such that
$$
\mathrm{Rad}(E)=\mathrm{Rad}(E_0).
$$

ii- Every descending chain of $A$-radical ideals terminates.\\

iii- $A$ is equational Artinian.
\end{theorem}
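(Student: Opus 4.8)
The plan is to fix $n$ throughout (each of the three conditions is really quantified over $n$, and the argument is uniform in $n$) and to exploit the Galois connection between subsets of $A^n$ and ideals of $T_{\mathcal{L}}(x_1,\dots,x_n)$ recorded in \S2.2. First I would assemble the elementary facts needed: $\mathrm{Rad}$ and $V_A$ are inclusion-reversing; $\mathrm{Rad}(\bigcup_i E_i)=\bigcap_i \mathrm{Rad}(E_i)$, directly from the definition; $V_A(\mathrm{Rad}(E))=E^{ac}$ for every $E\subseteq A^n$; and, dually, $\mathrm{Rad}(V_A(R))=R$ whenever $R$ is an $A$-radical ideal, since writing $R=\mathrm{Rad}(E)$ gives $\mathrm{Rad}(V_A(R))=\mathrm{Rad}(E^{ac})=\mathrm{Rad}(E)=R$ (using that $E\subseteq E^{ac}=V_A(\mathrm{Rad}(E))$, so the two radicals coincide). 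Hence $V_A$ and $\mathrm{Rad}$ are mutually inverse, inclusion-reversing bijections between the $A$-radical ideals of $T_{\mathcal{L}}(x_1,\dots,x_n)$ and the algebraic subsets of $A^n$; in particular every $A$-radical ideal is $\mathrm{Rad}(Y)$ for a (unique) algebraic set $Y$.

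The equivalence (ii)$\Leftrightarrow$(iii) is then immediate from this bijection: a strictly descending chain $R_1\supsetneq R_2\supsetneq\cdots$ of $A$-radical ideals is carried by $V_A$ to an ascending chain $V_A(R_1)\subseteq V_A(R_2)\subseteq\cdots$ of algebraic sets which is again strict, because applying $\mathrm{Rad}$ recovers the $R_k$; and conversely, applying $\mathrm{Rad}$ turns a strictly ascending chain of algebraic sets into a strictly descending chain of $A$-radical ideals. So one kind of chain stabilises exactly when the other does.

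For (i)$\Rightarrow$(ii), let $R_1\supseteq R_2\supseteq\cdots$ be a descending chain of $A$-radical ideals, put $Y_k=V_A(R_k)$ (an ascending chain of algebraic sets with $\mathrm{Rad}(Y_k)=R_k$), and set $E=\bigcup_k Y_k$. Then $\mathrm{Rad}(E)=\bigcap_k\mathrm{Rad}(Y_k)=\bigcap_k R_k$. By (i) there is a finite $E_0\subseteq E$ with $\mathrm{Rad}(E_0)=\mathrm{Rad}(E)=\bigcap_k R_k$; since $E_0$ is finite and the $Y_k$ increase, $E_0\subseteq Y_N$ for some $N$, so $\bigcap_k R_k=\mathrm{Rad}(E_0)\supseteq\mathrm{Rad}(Y_N)=R_N$. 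Together with $\bigcap_k R_k\subseteq R_m\subseteq R_N$ for $m\geq N$ this forces $R_m=R_N$ for all $m\geq N$, so the chain terminates.

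The remaining implication (ii)$\Rightarrow$(i) is where a little care is required, and I would argue by contradiction. Suppose some $E\subseteq A^n$ admits no finite subset $E_0$ with $\mathrm{Rad}(E_0)=\mathrm{Rad}(E)$; in particular $E\neq\varnothing$. I build a strictly descending chain of $A$-radical ideals recursively. Choose $\overline{a}_1\in E$. Having chosen $\overline{a}_1,\dots,\overline{a}_k$, the ideal $\mathrm{Rad}(\{\overline{a}_1,\dots,\overline{a}_k\})$ properly contains $\mathrm{Rad}(E)$ by hypothesis, so there is a pair $(p_k,q_k)$ in the former but not the latter; the latter membership means there is $\overline{a}_{k+1}\in E$ with $p_k^A(\overline{a}_{k+1})\neq q_k^A(\overline{a}_{k+1})$, and then $(p_k,q_k)\in\mathrm{Rad}(\{\overline{a}_1,\dots,\overline{a}_k\})\setminus\mathrm{Rad}(\{\overline{a}_1,\dots,\overline{a}_{k+1}\})$ since $\mathrm{Rad}(\{\overline{a}_1,\dots,\overline{a}_{k+1}\})=\mathrm{Rad}(\{\overline{a}_1,\dots,\overline{a}_k\})\cap\mathrm{Rad}(\{\overline{a}_{k+1}\})$. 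Thus $\mathrm{Rad}(\{\overline{a}_1\})\supsetneq\mathrm{Rad}(\{\overline{a}_1,\overline{a}_2\})\supsetneq\cdots$ is an infinite strictly descending chain of $A$-radical ideals, contradicting (ii). The only genuinely delicate point is precisely this conversion of the abstract chain condition into the finiteness statement, and it is handled by the explicit recursive construction above; everything else is bookkeeping with the Galois operations.
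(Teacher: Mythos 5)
Your proof is correct and follows essentially the same route as the paper: you establish $i\Leftrightarrow ii$ by the same union-of-algebraic-sets trick (for $i\Rightarrow ii$) and the same one-point-at-a-time construction of a strictly descending chain (for $ii\Rightarrow i$, recast as a contradiction), and $ii\Leftrightarrow iii$ by passing back and forth under $V_A$ and $\mathrm{Rad}$. The only cosmetic differences are that you make the inclusion-reversing bijection between $A$-radical ideals and algebraic sets explicit at the outset, and you state the containment $\mathrm{Rad}(E_0)\supseteq\mathrm{Rad}(Y_N)$ with the correct orientation, whereas the paper's corresponding line has the inclusion sign reversed (a harmless typo).
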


\begin{proof}
We first show that $i\Leftrightarrow ii$. Suppose $A$ satisfies $i$. Let
$$
\mathrm{Rad}(E_1)\supseteq \mathrm{Rad}(E_2)\supseteq \mathrm{Rad}(E_3)\supseteq \cdots
$$
be a descending chain of $A$-radicals, with $E_i\subseteq A^n$. Let
$$
E=\bigcup_{i=1}^{\infty}V_A(\mathrm{Rad}(E_i)).
$$
By $i$, there exists a finite $E_0\subseteq E$ such that $\mathrm{Rad}(E)=\mathrm{Rad}(E_0)$.  Since $E_0$ is finite, so there is $k\geq 1$ with
$$
E_0\subseteq V_A(\mathrm{Rad}(E_k)).
$$
Hence, we have
$$
\mathrm{Rad}(E)=\mathrm{Rad}(E_0)\supseteq \mathrm{Rad}(E_k).
$$
On the other hand
$$
\mathrm{Rad}(E)=\bigcap_{i=1}^{\infty}\mathrm{Rad}(E_i)\subseteq \mathrm{Rad}(E_k),
$$
so the chain terminates. Now, suppose $A$ has the property $ii$. We prove $i$. Let $E\subseteq A^n$. Choose an arbitrary  $c_1\in E$. If we have $\mathrm{Rad}(E)=\mathrm{Rad}(\{ c_1\})$, then we  done. So, let $\mathrm{Rad}(E)\varsubsetneq\mathrm{Rad}(\{ c_1\})$. This shows that there is an equation $p\approx q$ such that
$$
(p,q)\in \mathrm{Rad}(\{c_1\})\setminus \mathrm{Rad}(E).
$$
Hence there is a $c_2\in E$ with $p^A(c_2)\neq q^A(c_2)$. Now, if $\mathrm{Rad}(E)=\mathrm{Rad}(\{ c_1, c_2\})$, then the result follows, otherwise we can continue this argument to obtain a non-terminating descending chain of $A$-radicals. Therefore we proved $i\Leftrightarrow ii$. We prove $ii\Rightarrow iii$. Suppose
$$
Y_1\subseteq Y_2\subseteq Y_3\subseteq \cdots
$$
is a chain of algebraic sets in $A^n$. We have $Y_i=V_A(S_i)$, for some system $S_i$. Now,
$$
\mathrm{Rad}(Y_1)\supseteq \mathrm{Rad}(Y_2) \supseteq\mathrm{Rad}(Y_3)\supseteq \cdots
$$
is a descending chain of $A$-radicals and so it terminates, i.e. there is $k$ such that
$$
\mathrm{Rad}(Y_k)=\mathrm{Rad}(Y_{k+1})=\cdots .
$$
This shows that
$$
V_A(\mathrm{Rad}(Y_k))=V_A(\mathrm{Rad}(Y_{k+1}))=\cdots
$$
and hence
$$
Y_k=Y_{k+1}=\cdots .
$$
Therefore $A$ is equational Artinian. Finally, we prove $iii\Rightarrow ii$. Let
$$
\mathrm{Rad}(E_1)\supseteq \mathrm{Rad}(E_2)\supseteq\mathrm{Rad}(E_3)\supseteq \cdots
$$
be a descending chain of $A$-radicals with $E_i\subseteq A^n$. Then we have the following chain of algebraic sets;
$$
E_1^{ac}\subseteq E_2^{ac}\subseteq E_3^{ac}\subseteq \cdots,
$$
and by $iii$ this chain terminates. So, the chain of $A$-radicals already terminates.
\end{proof}

As we saw in the section 3, an algebra $A$ is equational noetherian if and only if every subset of $A^n$ is compact. We have  a similar statement for the case of equational Artinian domains. A subset of a topological space is {\em contra-compact}, if every covering of it by closed sets has a finite subcover. In literature, this type of subset are usually called {\em strongly S-closed}. However, we prefer to use the term "contra-compact".

\begin{theorem}
An equational domain $A$ is equational Artinian, iff for any $n$, every subset of $A^n$ is contra-compact.
\end{theorem}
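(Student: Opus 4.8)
The plan is to reduce the statement to a purely order-theoretic fact, exactly paralleling the way Proposition~1 identifies equational noetherianity with compactness of all subsets. First I would record the basic observation that in an equational domain a finite union of algebraic sets is again algebraic, so that the closed subsets of $A^n$ are precisely $\varnothing$ together with the algebraic sets of $A^n$. Since $\varnothing$ is the least element of this family, an ascending chain of closed subsets of $A^n$ terminates if and only if the associated chain of algebraic sets does; hence $A$ is equational Artinian if and only if, for every $n$, the poset of closed subsets of $A^n$ satisfies the ascending chain condition (ACC).

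The core of the argument is then a topological lemma that I would prove for an arbitrary space $X$: every subset of $X$ is contra-compact if and only if the closed subsets of $X$ satisfy ACC. For the implication from ACC to contra-compactness, given $Y\subseteq X$ and a cover $\{F_i\}_{i\in I}$ of $Y$ by closed sets admitting no finite subcover, I would build indices $i_1,i_2,\dots$ by the same inductive selection used in the proof of the preceding theorem: if $Y=\varnothing$ the empty subcover works, and otherwise, having chosen $i_1,\dots,i_k$, the set $F_{i_1}\cup\cdots\cup F_{i_k}$ does not contain $Y$, so there is $y\in Y$ outside it and some $i_{k+1}$ with $y\in F_{i_{k+1}}$, whence $F_{i_1}\cup\cdots\cup F_{i_{k+1}}\supsetneq F_{i_1}\cup\cdots\cup F_{i_k}$; this strictly ascending chain of closed sets contradicts ACC. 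For the converse, given an ascending chain $F_1\subseteq F_2\subseteq\cdots$ of closed sets, I would set $Y=\bigcup_m F_m$ and apply contra-compactness of $Y$ to the closed cover $\{F_m\}$; a finite subcover, the chain being ascending, forces $Y=F_N$ for some $N$, and hence $F_m=F_N$ for all $m\ge N$.

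Combining the two steps then yields the theorem: $A$ is equational Artinian if and only if for every $n$ the closed subsets of $A^n$ satisfy ACC, if and only if for every $n$ every subset of $A^n$ is contra-compact. I do not anticipate a genuine obstacle here; the only points requiring care are checking in the first step that adjoining $\varnothing$ to the algebraic sets leaves the chain condition unaffected (this is where the equational-domain hypothesis really enters, through the identification ``closed $=$ algebraic''), and, in the lemma, making sure the inductively constructed chain of finite unions is \emph{strictly} increasing, which is the same device that produces the non-terminating descending chain of $A$-radicals in the proof of the preceding theorem.
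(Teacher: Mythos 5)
Your proposal is correct, and the converse direction is genuinely different from the paper's. In the forward direction (equational Artinian implies every subset is contra-compact) you and the paper use essentially the same device: build a strictly increasing chain $C_{i_0}\subsetneq C_{i_0}\cup C_{i_1}\subsetneq\cdots$ of finite unions of cover members by repeatedly picking a point of $Y$ not yet covered, and observe that in a domain these unions are algebraic sets, contradicting termination. For the converse, however, the paper does not argue via ACC on closed sets at all: it covers $E\subseteq A^n$ by closures of singletons, uses contra-compactness of $E$ to extract a finite $E_0\subseteq E$ with $E\subseteq\overline{E}_0=E_0^{ac}$, deduces $\mathrm{Rad}(E_0)=\mathrm{Rad}(E)$, and then invokes the equivalence of Theorem~6 (existence of finite radical-generating subsets $\Leftrightarrow$ equational Artinian). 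Your route instead isolates a purely topological lemma — in any space, every subset is contra-compact iff the closed sets satisfy ACC — proved by applying contra-compactness of the union $Y=\bigcup_m F_m$ to the closed cover $\{F_m\}$, and then identifies, in a domain, ACC on closed sets with the equational Artinian condition. What your approach buys is self-containment and generality: the lemma is a statement about arbitrary spaces, the equational-domain hypothesis is used only to match ``closed'' with ``algebraic,'' and you never need the radical characterization of Theorem~6. What the paper's route buys is a direct illustration of how contra-compactness interacts with $A$-radicals, reinforcing the dictionary set up in the preceding theorem. Your cautionary remarks (that adjoining $\varnothing$ to the algebraic sets does not disturb the chain condition, and that the inductively constructed chain must be strictly increasing) are exactly the right points to verify and both check out.
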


\begin{proof}
Let a domain $A$ be equational Artinian. Let $C\subseteq A^n$ and
$$
C\subseteq \bigcup_{i\in I}C_i
$$
be a covering of $C$  by closed subsets. Suppose $i_0\in I$ be arbitrary. If $C\subseteq C_{i_0}$, then we have a finite subcover. Otherwise there is $i_1\in I$ such that $C_{i_1}$ is not contained in $C_{i_0}$ and $C\cap C_{i_1}\neq \emptyset$. If we have $C\subseteq C_{i_0}\cup C_{i_1}$, then we have a finite subcover,  otherwise, repeating this process, we obtain a chain
$$
C_{i_0}\varsubsetneq C_{i_0}\cup C_{i_1}\varsubsetneq C_{i_0}\cup C_{i_1}\cup C_{i_2}\varsubsetneq \cdots.
$$
Since $A$ is a domain, all terms of this chain are algebraic sets, and this violates the assumption of being equational Artinian for $A$. Hence every subset of $A^n$ is contra-compact.

Conversely, let every subset of $A^n$ be contra-compact and $E\subseteq A^n$. Since $E$ is contra-compact, there is a finite $E_0\subseteq E$, which is dense in $E$, i.e. $\overline{E}_0=E$. Since $A$ is domain, so we have $E_0^{ac}=E$. Hence we have
$$
\mathrm{Rad}(E_0)=\mathrm{Rad}(E_0^{ac})=\mathrm{Rad}(E).
$$
By the previous theorem $A$ is equational Artinian.

\end{proof}

There is also another topological characterization of equational Artinian algebras under some additional conditions, which we discuss it in the final section (see  the section 6). As a simple application, we discuss a basic property of equational Artinian groups. Let $G$ be a group  and $B\subseteq G^n$. We say that $B$ is an {\em identity base} of $G$, if for all $w\in F(x_1, \ldots, x_n)$, $w_B=1$ implies $(w\approx 1)\in \mathrm{Id}_G(x_1, \ldots, x_n)$. In other words, if all elements of $B$  satisfy the role $w\approx 1$, then $w\approx 1$ is an identity in $G$.

\begin{corollary}
Every equational Artinian group has a finite identity base.
\end{corollary}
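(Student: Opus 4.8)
The plan is to read off the finite identity base directly from the characterization of equational Artinian algebras established above, namely that $G$ is equational Artinian if and only if for every $n$ and every $E\subseteq G^n$ there is a finite $E_0\subseteq E$ with $\mathrm{Rad}(E)=\mathrm{Rad}(E_0)$ (condition (i) of the theorem on $A$-radicals). Since "identity base" is a notion relative to a fixed number $n$ of variables, "$G$ has a finite identity base" should be read as: for every $n$ there is a finite $B\subseteq G^n$ that is an identity base of $G$, and this is what I would prove.

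First I would fix $n$ and apply condition (i) to the set $E=G^n$ itself. This produces a finite subset $B\subseteq G^n$ with $\mathrm{Rad}(G^n)=\mathrm{Rad}(B)$ (note that $B\subseteq G^n$ forces $\mathrm{Rad}(G^n)\subseteq\mathrm{Rad}(B)$ anyway, so the content of (i) is the reverse inclusion). I claim this $B$ is an identity base of $G$.

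Next I would translate between the pair-of-terms formalism of $\mathrm{Rad}$ and the "$w\approx 1$" formalism used in the definition of identity base, working in the group language $\mathcal{L}=(1,{}^{-1},\cdot)$. Given $w\in F(x_1,\ldots,x_n)$, pick a term $t_w$ in $T_{\mathcal{L}}(x_1,\ldots,x_n)$ representing $w$; because the group axioms hold in $G$, the value $t_w^G(\overline{a})$ depends only on $w$, so the statements "$(t_w,1)\in\mathrm{Rad}(G^n)$" and "$(w\approx 1)\in\mathrm{Id}_G(x_1,\ldots,x_n)$" say the same thing, and "$(t_w,1)\in\mathrm{Rad}(B)$" says exactly that $w$ evaluates to $1$ at every point of $B$, i.e. $w_B=1$.

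With these identifications the verification is immediate: if $w\in F(x_1,\ldots,x_n)$ satisfies $w_B=1$, then $(t_w,1)\in\mathrm{Rad}(B)=\mathrm{Rad}(G^n)$, hence $w^G$ is identically $1$ on $G^n$, that is $(w\approx 1)\in\mathrm{Id}_G(x_1,\ldots,x_n)$. Thus $B$ is a finite identity base of $G$, as desired. There is no serious obstacle here; the only points requiring a line of care are the bookkeeping between terms and reduced words just mentioned, and the observation that the argument must be carried out separately for each arity $n$ — one should not expect a single finite $B$ to work for all $n$ at once. All the real work is contained in the already-established equivalence (iii)$\Leftrightarrow$(i) for equational Artinian algebras, of which this corollary is simply the instance $E=G^n$.
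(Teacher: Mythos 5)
Your proof is correct and takes essentially the same route as the paper: apply condition (i) of Theorem~8 with $E=G^n$ to obtain a finite $B\subseteq G^n$ with $\mathrm{Rad}(G^n)=\mathrm{Rad}(B)$, then note that $\mathrm{Rad}(G^n)=\mathrm{Id}_G(x_1,\ldots,x_n)$ so that $B$ is an identity base. The extra remarks you make about translating between the pair-of-terms presentation of $\mathrm{Rad}$ and the ``$w\approx 1$'' formalism, and about the argument being carried out arity by arity, are sound bookkeeping points that the paper leaves implicit.
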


\begin{proof}
We have
$$
\mathrm{Id}_G(x_1, \ldots, x_n)=\mathrm{Rad}(G^n).
$$
Since $G$ is equational Artinian, so there is a finite $B\subseteq G^n$ such that $\mathrm{Rad}(G^n)=\mathrm{Rad}(B)$, and so $B$ is an identity base of $G$.
\end{proof}

\subsection{Relative systems of equations}
Let $\mathbf{V}$ be a variety of algebras. In the next subsection, we will  show that  the relative free algebra $F_{\mathbf{V}}(X)$ has descending
chain condition on its ideals, if and only if  every element of $V$ is equational Artinian.  We will do this in a more general context.

In the sequel, we assume that $A$ is an algebra containing a trivial
subalgebra. Suppose $\mathbf{V}$ is a pre-variety of $A$-algebras.
As before, let $X$ be a finite set of variables. Suppose
$R_{\mathbf{V}}(X)$ is the smallest $A$-congruence with the property
$T_{\mathcal{L}(A)}(X)/R_{\mathbf{V}}(X)\in \mathbf{V}$.
Let
$$
F_{\mathbf{V}}(X)=\frac{T_{\mathcal{L}(A)}(X)}{R_{\mathbf{V}}(X)}.
$$
As before, we denote
an arbitrary element of $F_{\mathbf{V}}(X)$ by $\overline{t}$,
where $t$ is a term in $\mathcal{L}(A)$.

Suppose now, $B\in \mathbf{V}$ and $(b_1, \ldots, b_n)\in B^n$. We know that there exists a homomorphism $\varphi:F_{\mathbf{V}}(X)\to B$ such that
$$
\varphi(\overline{p})=p^B(b_1, \ldots, b_n).
$$
Therefore, if $\overline{p}_1=\overline{p}_2$, then $p_1^B(b_1, \ldots, b_n)=p_2^B(b_1, \ldots, b_n)$. This shows that the following definition
has no ambiguity.

\begin{definition}
A $\mathbf{V}$-equation is an expression of the form $\overline{p}\approx \overline{q}$, where $p$ and $q$ are terms in the language
$\mathcal{L}(A)$. If $B$ is an $A$-algebra and $(b_1, \ldots, b_n)$ is an element of $B^n$, we say that $(b_1, \ldots, b_n)$ is a solution of
$\overline{p}\approx \overline{q}$, if $p^B(b_1, \ldots, b_n)=q^B(b_1, \ldots, b_n)$.
\end{definition}

Let $S$ be a system of $\mathbf{V}$-equations. The set of all solutions of elements of $S$, will be denoted by $V_B^{\mathbf{V}}(S)$.
The following observation shows that this is an ordinary algebraic set. Let $S^{\prime}$ be the set of all equations $p\approx q$ such that
$\overline{p}\approx \overline{q}$ belongs to $S$. Then it can be easily verified that
$$
V_B^{\mathbf{V}}(S)=V_B(S^{\prime}).
$$
Therefore, in the sequel we will denote the algebraic set $V_B^{\mathbf{V}}(S)$ by the same notation $V_B(S)$. The Zariski topology arising from
algebraic sets relative to the pre-variety $\mathbf{V}$ is the same as the ordinary Zariski topology.  If $Y\subseteq B^n$, we define
$$
\mathrm{Rad}_B^{\mathbf{V}}(Y)=\{ \overline{p}\approx \overline{q}: \forall \overline{b}\in Y\ p^B(b_1, \ldots, b_n)=q^B(b_1, \ldots, b_n)\}.
$$
The quotient algebra
$$
\Gamma_{\mathbf{V}}(Y)=\frac{F_{\mathbf{V}}(X)}{\mathrm{Rad}_B^{\mathbf{V}}(Y)}
$$
is the {\em $\mathbf{V}$-coordinate algebra} of $Y$. Again, it is easy to see that $\Gamma_{\mathbf{V}}(Y)\cong \Gamma(Y)$.

\subsection{Chain conditions on the ideals of relatively free algebras}
An algebra $B$ will be called noetherian (Artinian), if any ascending (descending) chain of ideals in $B$ terminates. In the case of $A$-algebras,
we restrict ourself to {\em $A$-ideals}. A congruence $R$ in $B$ is called $A$-ideal, if for all $a_1, a_2\in A$, the assumption $(a_1, a_2)\in R$
implies $a_1=a_2$. The following theorem is is very similar to Theorem 4 of the section 5, and so we don't prove it here (for a proof one can see also \cite{Shah}).

\begin{theorem}
Let $\mathfrak{Y}$ be a variety of algebras of type $\mathcal{L}$ and $A\in \mathfrak{Y}$ containing a trivial subalgebra. Let
$\mathbf{V}=\mathfrak{Y}_A$ be the class of elements of $\mathfrak{Y}$ which are $A$-algebra. Then for any finite $X$, the relatively free algebra
$F_{\mathbf{V}}(X)$ is noetherian if and only if every $B\in \mathbf{V}$ is $A$-equationally noetherian.
\end{theorem}

Note that an $A$-algebra $B$ is called $A$-equational noetherian ($A$-equational Artinian), if it is equational noetherian (equational Artinian)
as an algebra of type $\mathcal{L}(A)$.

We are now ready to prove the analogue of the above theorem for the property of being Artinian.

\begin{theorem}
Let $\mathfrak{Y}$ be a variety of algebras of type $\mathcal{L}$ and $A\in \mathfrak{Y}$ containing a trivial subalgebra. Let
$\mathbf{V}=\mathfrak{Y}_A$ be the class of elements of $\mathfrak{Y}$ which are $A$-algebra. Then for any finite $X$, the relatively free algebra
$F_{\mathbf{V}}(X)$ is  Artinian, if and only if every $B\in \mathbf{V}$ is $A$-equationally Artinian.
\end{theorem}

\begin{proof}
The main idea of the proof is the same as in Theorem 4. First, suppose that $F_{\mathbf{V}}(X)$ is Artinian and $B\in \mathbf{V}$. Let
$$
Y_1\subseteq Y_2\subseteq Y_3\subseteq \cdots
$$
be a chain of algebraic sets in $B^n$. Then we have
$$
\mathrm{Rad}(Y_1)\supseteq \mathrm{Rad}(Y_2)\supseteq \mathrm{Rad}(Y_3)\supseteq \cdots,
$$
which is a chain of $A$-ideals in $F_{\mathbf{V}}(X)$. So, it terminates and hence, there is $m$ such that
$$
\mathrm{Rad}(Y_m)=\mathrm{Rad}(Y_{m+1})=\cdots.
$$
This implies that $Y_m=Y_{m=1}=\cdots$ and therefore $B$ is equational Artinian. Now, suppose $B$ is equational Artinian for all $B\in \mathbf{V}$.
If $R$ is an $A$-ideal in $F_{\mathbf{V}}(X)$, we put $B(R)=F_{\mathbf{V}}(X)/R$, which is belong to $\mathbf{V}$. It is easy to see that
$(\overline{p}, \overline{q})\in R$ if and only if
$$
(\overline{x}_1/R, \ldots, \overline{x}_n/R)\in V_{B(R)}(\overline{p}\approx \overline{q}).
$$
Now, let
$$
R_1\supsetneq R_2\supsetneq R_3\supsetneq\cdots
$$
be a proper descending chain of $A$-ideals in $F_{\mathbf{V}}(X)$. Note that in the same time, $R_i$ is a system of $\mathbf{V}$-equations.
Suppose $(\overline{p}_i, \overline{q}_i)\in R_i\setminus R_{i+1}$. Suppose also that $T_i$ is the $A$-ideal generated by the set
$R_{i+1}+(\overline{p}_i, \overline{q}_i)$. Then we have
$$
R_{i+1}\varsubsetneq T_i\varsubsetneq R_i.
$$
Suppose $B_i=B(R_i)$ and $B=\prod_iB_i$. So $B\in\mathbf{V}$. Hence it is equational Artinian. Note that, since $A$ contains trivial algebra,
so $B_i\leq B$. Hence
$$
U=(\overline{x}_1/R_{i+1}, \ldots, \overline{x}_n/R_{i+1})\in V_{B_{i+1}}(R_{i+1})\subseteq V_B(R_{i+1}).
$$
But, $U$ does not belong to $V_B(T_i)$, since otherwise, $U\in V_B(\overline{p}_i\approx \overline{q}_i)$ which implies that
$(\overline{p}_i, \overline{q}_i)\in R_{i+1}$. Therefore, we have
$$
V_B(R_i)\subseteq V_B(T_i)\varsubsetneq V_B(R_{i+1}),
$$
so the chain $V_B(R_1)\subsetneqq V_B(R_2)\subsetneqq V_B(R_3)\subsetneqq\cdots$ is a proper chain of algebraic sets, which is a contradiction.

\end{proof}

\subsection{Hilbert's basis theorem}
A universal algebraic version of the Hilbert's basis theorem is given in \cite{Shah}. In this subsection, we discuss briefly some results on this item.
Suppose $\mathcal{L}$ is an algebraic language and $\mathfrak{Y}$ is a
variety of algebras of type $\mathcal{L}$. Let $A\in \mathfrak{Y}$ and $\mathbf{V}=\mathfrak{Y}_A$ be the class of all elements of $\mathfrak{Y}$
which are $A$-algebra. If $A$ has maximal property on its ideals, is the algebra $F_{\mathbf{V}}(X)$ noetherian?

\begin{example}
Let $\mathcal{L}=(0, 1, +, \times)$ be the language of unital rings and $\mathfrak{Y}$ be the variety of all commutative rings with unite element.
Let $A\in \mathfrak{Y}$ and  $\mathbf{V}=\mathfrak{Y}_A$. If $X=\{ x_1, \ldots, x_n\}$, then $F_{\mathbf{V}}(X)=A[x_1, \ldots, x_n]$ and hence
Hilbert's basis theorem is valid in this case.
\end{example}

\begin{example}
Let $\mathcal{L}=(e, ^{-1}, \cdot)$ be the language of groups. Let $\mathfrak{Y}$ be the variety of groups. Let $A$ be any group and
$\mathbf{V}=\mathfrak{Y}_A$. Then $F_{\mathbf{V}}(X)=A\ast F(X)$. We show that $F_{\mathbf{V}}(X)$ is not noetherian even if $A$ has
maximal property on its normal subgroups (max-n). Consider the Baumslag-Solitar group
$$
B_{m, n}=\langle a, t: ta^mt^{-1}=a^n\rangle,
$$
where $m, n\geq 1$ and $m\neq n$. Then, as is proved in \cite{BMR1},
this group is not equationally noetherian. Let $B=A\ast B_{m,n}$.
Then $B$ is an $A$-group which is not $A$-equationally noetherian.
So, by the theorem 8 of the previous subsection, $A\ast F(X)$ is not noetherian, Hilbert's
basis theorem fails.
\end{example}

\begin{example}
Let $\mathfrak{Y}$ be the variety of abelian groups and $A\in \mathfrak{Y}$ be finitely generated. Suppose $\mathbf{V}=\mathfrak{Y}_A$.
Then it is easy to see that $F_{\mathbf{V}}(X)=A\times F_{ab}(X)$, where $F_{ab}(X)$ is the free abelian group generated by $X$. So,
$F_{\mathbf{V}}(X)=A\times \mathbb{Z}^n$. As a $\mathbb{Z}$-module, clearly $A\times \mathbb{Z}^n$ is noetherian, so Hilbert's basis theorem is true
for any finitely generated abelian group $A$ in the variety of abelian groups. As a result, every abelian group $B$ containing $A$
is $A$-equationally noetherian.
\end{example}

As we mentioned above, if $A\leq B$ and $B$ is not equationally noetherian, then it is also not $A$-equationally noetherian. So, let
$\mathfrak{Y}$ be a variety of algebras and $A\in \mathfrak{Y}$. Let $\mathbf{V}=\mathfrak{Y}_A$. If there exists an element
$B\in \mathfrak{Y}$ which is not equationally noetherian, then by our theorem, $F_{\mathbf{V}}(X)$ is not noetherian, so we never have a
version of Hilbert's basis theorem for the variety $\mathfrak{Y}$.

\begin{example}
Let $\mathfrak{Y}$ be the variety of nilpotent groups of class at most $c$. If $A\in \mathfrak{Y}$ and $\mathbf{V}=\mathfrak{Y}_A$ and
$B\in \mathfrak{Y}$ is not finitely generated, then by \cite{MR}, $B$ is not equationally noetherian and hence $F_{\mathbf{V}}(X)$ is not noetherian.
\end{example}

\subsection{Examples of equational Artinian algebras}
In this subsection, we give some examples of the equational Artinian
algebras.

\begin{example}
Suppose $A$ is an equational Artinian algebra (for example, a finite
algebra). Then for any set $I$, the algebra $A^I$ is also equational
Artinian. This is true, because for any equation $p\approx q$, there
is a natural bijection between the  sets $V_A(p\approx q)^I$ and
$V_{A^I}(p\approx q)$, given by
$$
(a^1_i, \ldots, a^n_i)_{i\in I}\mapsto ((a^1_i)_{i\in I}, \ldots,
(a^n_i)_{i\in I}).
$$
Hence, any chain
$$
V_{A^I}(S_1)\subseteq V_{A^I}(S_2)\subseteq V_{A^I}(S_3)\subseteq
\cdots
$$
becomes
$$
V_A(S_1)^I\subseteq V_A(S_2)^I\subseteq V_A(S_3)^I\subseteq \cdots,
$$
and consequently, we obtain a chain
$$
V_A(S_1)\subseteq V_A(S_2)\subseteq V_A(S_3)\subseteq \cdots,
$$
which terminates.
\end{example}

\begin{example}
Let $R$ be a noetherian ring and $A=(R, +, -)$. Then $A$ is
equational Artinian. To prove this, let $p=a_1x_1+\cdots+a_nx_n$
with $a_i\in \mathbb{Z}$, be a term. We show that $V_A(p\approx 0)$
is a submodule of $R^n$. Clearly, this algebraic set is closed under
addition, so let $\lambda\in R$. Then the map $\alpha_{\lambda}:R\to
R$, $\alpha_{\lambda}(x)=\lambda x$, is a homomorphism of $A$. Since
every algebraic set is invariant under homomorphisms, so we have
$\lambda V_A(p\approx 0)\subseteq V_A(p\approx 0)$. This shows that
for any system $S$, the algebraic set $V_A(S)$ is a submodule of
$R^n$ ($n$ is the number of indeterminate in $S$). But $R^n$ is
noetherian and hence $A$ is equational Artinian.
\end{example}

\begin{example}
Let $R$ be a noetherian ring and $\Lambda=(\lambda_1, \ldots,
\lambda_m)$ be a tuple of elements of $R$ with $\sum_i\lambda_i=1$.
Define an $m$-ary operation
$$
p_{\Lambda}(x_1, \ldots, x_m)=\sum_i\lambda_ix_i.
$$
Then by a similar argument as in the previous example, we see that
$A=(R, p_{\Lambda})$ is equational Artinian. If we assume that $R$
is noetherian but not Artinian ring, then we obtain example of an
equational Artinian algebra which is not equational noetherian.
\end{example}

\begin{example}
It can be shown that only finite fields are equational Artinian in the language of rings $\mathcal{L}=(+, \cdot, 0, 1)$.
\end{example}

\section{Other types of equational conditions}
During this article, we saw many cases of equational conditions in the universal algebraic geometry. It is easy to find new kinds of equational
conditions.  In this final section, we give two more examples of this kind of conditions.

Let $A$ be an algebra of type $\mathcal{L}$. A system $S$ is called $A$-{\em independent} if for any finite subset $S_0\subseteq S$, there exists an
equation $(p\approx q)\in S_0$, such that $V_A(S_0)\varsubsetneq V_A(S_0\setminus p\approx q)$.

\begin{theorem}
For any algebra $A$ and any system $S$, there exists an $A$-independent subsystem $S^{\prime}$ equivalent to $S$ over $A$.
\end{theorem}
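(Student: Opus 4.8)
The plan is to build $S'$ by a transfinite/maximality argument, extracting an $A$-independent subsystem from $S$ while preserving the algebraic set $V_A(S)$. The key observation is that for a subsystem $T \subseteq S$, failing to be $A$-independent means some finite $T_0 \subseteq T$ is ``redundant'' in the sense that for every $(p \approx q) \in T_0$ we have $V_A(T_0) = V_A(T_0 \setminus \{p \approx q\})$; but this propagates: if that holds for $T_0$, one can actually drop equations from $T$ without changing $V_A(T)$. So I would look for a subsystem $S'$ that is \emph{minimal} with respect to being equivalent to $S$ — but since there is no guarantee a minimal one exists (descending chains of subsystems need not stabilize), the right move is a Zorn's lemma argument on a cleverly chosen poset, or alternatively a direct well-ordering construction.

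Concretely, here is the construction I would carry out. Well-order $S = \{(p_\alpha \approx q_\alpha) : \alpha < \lambda\}$. Define a subsystem $S'$ by transfinite recursion: put $(p_\alpha \approx q_\alpha)$ into $S'$ if and only if
$$
V_A\Bigl(\{(p_\beta \approx q_\beta) \in S' : \beta < \alpha\} \cup \{(p_\gamma \approx q_\gamma) : \gamma > \alpha\}\Bigr) \varsubsetneq V_A\Bigl(\{(p_\beta \approx q_\beta) \in S' : \beta < \alpha\} \cup \{(p_\gamma \approx q_\gamma) : \gamma > \alpha, \gamma \neq \alpha\}\Bigr),
$$
i.e.\ keep the equation exactly when it is not implied by all the others (those already kept, plus all later ones). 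Equivalently and more cleanly: let $S'$ be the set of all $(p \approx q) \in S$ such that $V_A(S \setminus \{p \approx q\}) \varsupsetneq V_A(S)$ is \emph{not} the criterion directly (that can fail for every equation even when $S$ is not finitely reducible) — so instead I would iterate: repeatedly discard any equation implied by the rest, using transfinite recursion so that at stage $\alpha$ one discards $(p_\alpha \approx q_\alpha)$ if it is implied by all equations not yet discarded. One then sets $S'$ to be what survives, and checks $V_A(S') = V_A(S)$ by a transfinite-induction argument showing each discard preserves the solution set (this uses that $V_A$ of a union is the intersection of the $V_A$'s, so discarding a redundant equation at a limit stage still works because the intersection over the surviving set equals the intersection over all of $S$).

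Then I must verify $S'$ is $A$-independent. Suppose not: some finite $S_0 \subseteq S'$ has $V_A(S_0) = V_A(S_0 \setminus \{p \approx q\})$ for every $(p \approx q) \in S_0$. Take the equation in $S_0$ with the largest index $\alpha$ in the well-order. At the stage when we examined $(p_\alpha \approx q_\alpha)$, all other members of $S_0$ had already survived (if indexed earlier) or were still present (if indexed later), so $(p_\alpha \approx q_\alpha)$ is implied by the surviving set — but redundancy of $(p_\alpha\approx q_\alpha)$ relative to $S_0\setminus\{p_\alpha\approx q_\alpha\}$ upgrades to redundancy relative to the larger surviving set (since more constraints only shrink $V_A$, and $V_A(S_0\setminus\{p_\alpha\approx q_\alpha\}) = V_A(S_0) \subseteq V_A(p_\alpha\approx q_\alpha)$ gives the inclusion needed). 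Hence $(p_\alpha \approx q_\alpha)$ should have been discarded, contradiction.

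\textbf{The main obstacle} I anticipate is the limit-stage bookkeeping: one must be careful that ``the equation is implied by everything not yet discarded'' is a coherent notion through limit ordinals, and that the final $S'$ really satisfies $V_A(S') = V_A(S)$ rather than only $V_A$ of some intermediate stage. The cleanest fix is to phrase the whole thing in terms of $V_A$ directly — $V_A(S') = \bigcap_{(p\approx q)\in S'} V_A(p\approx q)$ — and prove by transfinite induction on $\alpha$ that the intersection over all surviving equations of index $<\alpha$ together with all equations of index $\geq \alpha$ equals $V_A(S)$; the inductive step (including limits) is then just a manipulation of intersections, and the discard rule is designed precisely to make it go through. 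Once $V_A(S') = V_A(S)$ is in hand, the independence check above is short.
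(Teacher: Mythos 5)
Your transfinite-recursion approach has a genuine gap, and it is precisely at the limit stages you flagged as the ``main obstacle.'' You assert that the inductive step at limits ``is then just a manipulation of intersections,'' but it is not: if $T_\alpha$ denotes the surviving-plus-pending system at stage $\alpha$, then at a limit $\lambda$ one has $T_\lambda = \bigcap_{\alpha<\lambda}T_\alpha$, and $V_A\bigl(\bigcap_{\alpha<\lambda}T_\alpha\bigr)$ can be \emph{strictly larger} than $V_A(S)$ even though $V_A(T_\alpha)=V_A(S)$ for every $\alpha<\lambda$. The single-step discards each preserve $V_A$, but the cumulative effect of infinitely many discards need not. Concretely, let $A=\bigoplus_{p\ \mathrm{prime}}\mathbb{Z}/p\mathbb{Z}$ in the language of abelian groups, and let $S=\{\,px\approx 0 : p\ \mathrm{prime}\,\}$, well-ordered by listing the primes in increasing order. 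Then $V_A(S)=\{0\}$, and at every stage $\alpha$ the set of ``others'' (past survivors plus all later equations) still contains infinitely many of the $p x\approx 0$, hence cuts out $\{0\}$ and therefore implies the equation under scrutiny. So every equation gets discarded, $S'=\varnothing$, and $V_A(S')=A\supsetneq\{0\}=V_A(S)$. Your rule over-discards because the redundancy of $e_\alpha$ is certified using future equations that are themselves subsequently thrown away. (Your verification that the resulting $S'$ is $A$-independent is fine; it is the equivalence $V_A(S')=V_A(S)$ that fails.) A discard rule that consults \emph{only past survivors} — keep $e_\alpha$ iff $e_\alpha$ is not implied by the already-kept equations of index $<\alpha$ — does work, and is essentially an iterative rendering of the paper's actual argument. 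The paper avoids the issue entirely by going bottom-up: apply Zorn's lemma to the poset of $A$-independent subsystems of $S$ (independence is a finitary condition, so unions of chains stay independent), take a maximal element $S'$, and show $V_A(S')=V_A(S)$ by contradiction — any $\overline{a}\in V_A(S')\setminus V_A(S)$ yields an equation of $S$ failing at $\overline{a}$ that could be adjoined to $S'$ while preserving independence, contradicting maximality.
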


\begin{proof}
Suppose $A$ and $S$ are given and $\mathcal{F}$ is the collection of all $A$-independent subsets of $S$. Using Zorn's lemma, we show that
$\mathcal{F}$ has a maximal element. Let $\{ T_{\alpha}\}_{\alpha}\subseteq \mathcal{F}$ be a chain. Let $T=\bigcup T_{\alpha}$ and $T^0\subseteq T$
be  finite. Then $T^0\subseteq T_{\alpha}$ for some $\alpha$. Hence, there exists $(p\approx q)\in T^0$, with
$$
V_A(T^0)\varsubsetneq V_A(T^0\setminus p\approx q).
$$
This shows that $T$ is $A$-independent and so the chain has upper bound. Therefore $\mathcal{F}$ has a maximal element $S^{\prime}$.
Suppose $V_A(S)\varsubsetneq V_A(S^{\prime})$. So, there exists an element $\overline{a}\in V_A(S^{\prime})\setminus V_A(S)$,
and hence there is an equation $(p_1\approx q_1)\in S$, such that $p_1(\overline{a})\neq q_1(\overline{a})$. Let
$S^{\prime\prime}=S^{\prime}+(p_1\approx q_1)$. We show that $S^{\prime\prime}\in \mathcal{F}$. Let $T_0\subseteq S^{\prime\prime}$ be finite.
If $T_0\subseteq S^{\prime}$, then there is $(p\approx q)\in T_0$ with
$$
V_A(T_0)\varsubsetneq V_A(T_0\setminus p\approx q).
$$
If $(p_1\approx q_1)\in T_0$, then $T_0\setminus p_1\approx q_1\subseteq S^{\prime}$, and hence
$$
\overline{a}\in V_A(T_0\setminus p_1\approx q_1).
$$
But, since $p_1(\overline{a})\neq q_1(\overline{a})$, so $\overline{a}$ does not belong to $V_A(T_0)$. Therefore
$$
V_A(T_0)\varsubsetneq V_A(T_0\setminus p_1\approx q_1),
$$
and hence $S^{\prime\prime}\in \mathcal{F}$, which is impossible. Hence we must have $V_A(S)=V_A(S^{\prime})$.
\end{proof}

A topological space $M$ is $\omega$-cocompact if and only if for any countable open covering $M=\bigcup_{i=1}^{\infty}C_i$,
there exists $m\geq 1$ such that for all $j_1, j_2\geq m$, we have
$$
\bigcup_{i=j_1}^{\infty}C_i=\bigcup_{i=j_2}^{\infty}C_i.
$$

\begin{lemma}
If a topological space $M$ is Artinian, then every subset of $M$ is $\omega$-cocompact. The converse is also true.
\end{lemma}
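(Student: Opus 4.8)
The plan is to translate both sides into a statement about chains of open sets and check they coincide. First I would record the dictionary: for any space $N$, being Artinian means every ascending chain of closed subsets terminates, which by taking complements is the same as saying every descending chain of open subsets of $N$ terminates. I would also note that a subspace $Y$ of an Artinian space is again Artinian: given closed sets $G_i$ of the ambient space and an ascending chain $G_1\cap Y\subseteq G_2\cap Y\subseteq\cdots$, replacing $G_j$ by $G_1\cup\cdots\cup G_j$ yields an ascending chain of closed sets in the ambient space with the same traces on $Y$, and that chain terminates.

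For the forward implication I would argue as follows. Assume $M$ is Artinian and fix $Y\subseteq M$; by the remark above $Y$ is Artinian. Let $Y=\bigcup_{i=1}^{\infty}C_i$ be a countable open covering of the subspace $Y$, and set $U_j=\bigcup_{i=j}^{\infty}C_i$. Each $U_j$ is open in $Y$ and $U_1\supseteq U_2\supseteq\cdots$, so this descending chain of open subsets of $Y$ terminates: there is $m$ with $U_m=U_{m+1}=\cdots$. Then for all $j_1,j_2\ge m$ we have $\bigcup_{i=j_1}^{\infty}C_i=\bigcup_{i=j_2}^{\infty}C_i$, which is precisely $\omega$-cocompactness of $Y$.

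For the converse I would start from an arbitrary descending chain $U_1\supseteq U_2\supseteq U_3\supseteq\cdots$ of open subsets of $M$ and show it terminates, which gives Artinianness by the dictionary above. Put $Y=U_1$. Each $U_i$ is open in $M$ and contained in $U_1$, hence $U_i=U_i\cap Y$ is open in the subspace $Y$, and $\bigcup_{i=1}^{\infty}U_i=U_1=Y$, so $\{U_i\}_{i\ge 1}$ is a countable open covering of $Y$. Because the chain is descending, $\bigcup_{i=j}^{\infty}U_i=U_j$ for every $j$. Applying $\omega$-cocompactness of the subset $Y$ to this covering produces $m$ with $U_{j_1}=U_{j_2}$ for all $j_1,j_2\ge m$, i.e. the chain terminates.

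The whole thing is essentially definition-chasing, so I do not expect a genuine obstacle; the single point worth isolating is the observation used in the converse, namely that every descending chain of open sets $U_1\supseteq U_2\supseteq\cdots$ is realized as the family of tail-unions of the open covering $\{U_i\}$ of the subset $U_1$. The only place needing a little care is keeping the subspace topology straight, i.e. checking that the $C_i$ (resp. the $U_i$) are open in the relevant subset so that the two directions really are about descending chains of open subsets of that subset.
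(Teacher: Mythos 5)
Your proof is correct and follows essentially the same definition-chasing route as the paper: first show that a subspace of an Artinian space is Artinian, then translate $\omega$-cocompactness into termination of the descending chain of tail-unions, passing between closed and open sets by complementation. The one place where you genuinely diverge is the converse. The paper takes an ascending chain of closed sets $Y_1\subseteq Y_2\subseteq\cdots$ and covers $M$ itself by $\{Y_i^c\}_i\cup\{M\}$ (padding with $M$ so the open sets really do cover), and thus uses only $\omega$-cocompactness of $M$; you instead take a descending chain of open sets $U_1\supseteq U_2\supseteq\cdots$ and invoke $\omega$-cocompactness of the subset $U_1$, which the chain covers with tail-unions $\bigcup_{i\ge j}U_i=U_j$. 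Both are valid under the lemma's hypothesis; yours is arguably cleaner because it avoids the padding and the two directions become perfectly symmetric, while the paper's version has the small advantage of showing that $\omega$-cocompactness of $M$ alone (without assuming it for all subsets) already forces $M$ to be Artinian.
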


\begin{proof}
We first show that for an Artinian topological space $M$, every $N\subseteq M$ is also Artinian. Suppose
$$
Y_1^{\prime}\subseteq Y_2^{\prime}\subseteq Y_3^{\prime}\subseteq \cdots
$$
is a chain of closed sets in $N$. We have $Y_i^{\prime}=Y_i\cap N$, for all $i$, where $Y_i$ is closed in $M$. Let $V_i=Y_1\cup\cdots\cup Y_i$.
Then $V_i$ is closed in $M$ and we have
$$
V_1\subseteq V_2\subseteq V_3\subseteq \cdots.
$$
So, there is $m$ such that $V_m=V_{m+1}=V_{m+2}=\cdots$. This shows that
$$
Y_1\cup\cdots \cup Y_m=Y_1\cup\cdots \cup Y_m\cup Y_{m+1}=\cdots,
$$
and hence
$$
Y_1^{\prime}\cup\cdots \cup Y_m^{\prime}=Y_1^{\prime}\cup\cdots \cup Y_m^{\prime}\cup Y_{m+1}^{\prime}=\cdots,
$$
which implies that $Y_m^{\prime}=Y_{m+1}^{\prime}=\cdots$. This shows that $N$ is also Artinian. Now, suppose $M$ is Artinian
and $M=\bigcup_{i=1}^{\infty}C_i$. We have $C_i=M\setminus D_i$, with $D_i$ closed. Then $\bigcap_{i=1}^{\infty}D_i=\emptyset$. Define
$$
Y_m=\bigcap_{i=m}^{\infty}D_i.
$$
Then $Y_1\subseteq Y_2\subseteq Y_3\subseteq \cdots$ is a chain of closed sets and so there exists $m$ with $Y_m=Y_{m+1}=\cdots$.
Hence, for$j_1, j_2\geq m$ we have
$$
\bigcap_{i=j_1}^{\infty}D_i=\bigcap_{i=j_2}^{\infty}D_i,
$$
and so
$$
\bigcup_{i=j_1}^{\infty}C_i=\bigcup_{i=j_2}^{\infty}C_i.
$$
Hence $M$ is $\omega$-cocompact. Conversely, suppose $M$ is $\omega$-cocompact and $
$$
Y_1\subseteq Y_2\subseteq Y_3\subseteq \cdots$ is a chain of closed subsets of $M$. Then we have
$$
M=\bigcup_{i=1}^{\infty}Y_i^c\cup M,
$$
where the superscript $^c$ denotes the complement. So, there exists $m$ such that for all $j_1, j_2\geq m$,
$$
\bigcup_{i=j_1}^{\infty}Y_i^c=\bigcup_{i=j_2}^{\infty}Y_i^c.
$$
This implies that
$$
\bigcap_{i=m}^{\infty}Y_i=\bigcap_{i=m+1}^{\infty}Y_i=\cdots.
$$
Therefore, we have
$$
Y_m=\bigcap_{i=m+1}^{\infty}Y_i=\bigcap_{i=m+2}^{\infty}Y_i.
$$
Similarly, we have
$$
Y_{m+1}=\bigcap_{i=m+2}^{\infty}Y_i=\cdots.
$$
Hence $Y_m=Y_{m+1}=\cdots$, and so $M$ is Artinian.

\end{proof}

By this lemma,  a domain $A$ is equational Artinian, if and only if, for all $n$, any subset of $A^n$ is $\omega$-cocompact.
Let $A$ be an algebra and $S$ be a system of equation. We say that $S$ is {\em $A$-stable}, if for any proper finite subset $S^{\prime}\subseteq S$,
we have $V_A(S)=V_A(S\setminus S^{\prime})$.

\begin{theorem}
Let $A$ be an equational domain which is weak equational noetherian. Suppose for any system $S$, there exists a finite subset $S_0$ such that
$S\setminus S_0$ is $A$-stable. Then $A$ is equational Artinian. The converse is true if in addition, the language $\mathcal{L}$ is countable.
\end{theorem}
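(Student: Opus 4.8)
The plan is to prove both implications by direct manipulations of algebraic sets and radicals; the hypotheses that $A$ be an equational domain and weak equational noetherian will in fact play no role and are there only to pair the statement with the contra-compactness and $\omega$-cocompactness characterizations above. For the forward implication I would argue by contradiction: if $A$ is not equational Artinian there is, for some $n$, a strictly increasing chain of algebraic sets $Y_1\varsubsetneq Y_2\varsubsetneq\cdots$ in $A^n$. Because each $Y_i$ is algebraic, $V_A(\mathrm{Rad}(Y_i))=Y_i$, so the decreasing chain $\mathrm{Rad}(Y_1)\supseteq\mathrm{Rad}(Y_2)\supseteq\cdots$ is in fact strict. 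Choose $(p_i\approx q_i)\in\mathrm{Rad}(Y_i)\setminus\mathrm{Rad}(Y_{i+1})$ for each $i$; since $(p_j\approx q_j)\in\mathrm{Rad}(Y_j)\subseteq\mathrm{Rad}(Y_{i+1})$ whenever $j>i$, these equations are pairwise distinct, so $S=\{p_i\approx q_i:\ i\geq 1\}$ is an \emph{infinite} system. Apply the hypothesis to $S$: there is a finite $S_0\subseteq S$, corresponding to a finite set of indices $J$, with $S\setminus S_0=\{p_i\approx q_i:\ i\notin J\}$ being $A$-stable; put $V=V_A(S\setminus S_0)$.

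The heart of the forward argument is a squeeze on $V$. First, $Y_m\subseteq V$ for every $m$: deleting from $S\setminus S_0$ the finite (hence proper) subset $\{p_i\approx q_i:\ i\notin J,\ i<m\}$ leaves $\{p_i\approx q_i:\ i\notin J,\ i\geq m\}$, so $A$-stability gives $V=V_A(\{p_i\approx q_i:\ i\notin J,\ i\geq m\})$; and for every such $i$ we have $Y_m\subseteq Y_i$, whence $(p_i\approx q_i)\in\mathrm{Rad}(Y_i)\subseteq\mathrm{Rad}(Y_m)$ and $Y_m\subseteq V_A(p_i\approx q_i)$, so $Y_m\subseteq V$ after intersecting. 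Second, since $J$ is finite pick any $i\notin J$; then $(p_i\approx q_i)\in S\setminus S_0$ gives $V\subseteq V_A(p_i\approx q_i)$, while $(p_i\approx q_i)\notin\mathrm{Rad}(Y_{i+1})$ provides $\overline{a}\in Y_{i+1}$ with $p_i^A(\overline{a})\neq q_i^A(\overline{a})$, i.e.\ $\overline{a}\notin V_A(p_i\approx q_i)\supseteq V$; but $\overline{a}\in Y_{i+1}\subseteq V$ by the first part --- contradiction. Hence $A$ is equational Artinian.

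For the converse, assume also that $\mathcal{L}$ is countable and that $A$ is equational Artinian, and let $S$ be a system in $x_1,\ldots,x_n$. If $S$ is finite take $S_0=S$ (the empty system is vacuously $A$-stable). Otherwise, $At_{\mathcal{L}}(x_1,\ldots,x_n)=T_{\mathcal{L}}(X)\times T_{\mathcal{L}}(X)$ is countable, so enumerate $S=\{e_1,e_2,\ldots\}$ and set $S^{(m)}=\{e_m,e_{m+1},\ldots\}$. Then $V_A(S^{(1)})\subseteq V_A(S^{(2)})\subseteq\cdots$ is an increasing chain of algebraic sets, so it stabilizes at some $M$: $V_A(S^{(M)})=V_A(S^{(m)})$ for all $m\geq M$. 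Take $S_0=\{e_1,\ldots,e_{M-1}\}$, so $S\setminus S_0=S^{(M)}$; for a proper finite $T\subseteq S^{(M)}$, choosing $K$ with $T\subseteq\{e_M,\ldots,e_K\}$ gives $S^{(K+1)}\subseteq S^{(M)}\setminus T\subseteq S^{(M)}$, hence $V_A(S^{(M)})\subseteq V_A(S^{(M)}\setminus T)\subseteq V_A(S^{(K+1)})=V_A(S^{(M)})$, so $S\setminus S_0=S^{(M)}$ is $A$-stable.

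The step I expect to be the main obstacle is the bookkeeping in the forward direction: one must make sure the auxiliary system $S$ is genuinely infinite --- which is precisely why the witnesses $p_i\approx q_i$ must be pairwise distinct --- so that every finite subset of $S\setminus S_0$ is proper and $A$-stability applies unrestrictedly, and then run the two-sided squeeze carefully enough to locate the offending point $\overline{a}$. Everything else is a routine use of the identities $V_A\mathrm{Rad}\,V_A=V_A$ and $\mathrm{Rad}\,V_A\mathrm{Rad}=\mathrm{Rad}$ from Section~2.
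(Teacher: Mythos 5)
Your proof is correct and takes a genuinely different route from the paper's. The paper proves both directions through the auxiliary Lemma~2 on $\omega$-cocompact topological spaces: it first uses the equational-domain and weak-equational-noetherian hypotheses to write each Zariski-open set as $A^n\setminus V_A(S_i)$ with $S_i$ finite, feeds the resulting countable covering $A^n=\bigcup_i (A^n\setminus V_A(S_i))$ into the $A$-stability hypothesis to conclude $A^n$ is $\omega$-cocompact, and then invokes the lemma to get Artinian; the converse again passes through $\omega$-cocompactness of the open subset $A^n\setminus V_A(S)$. Your argument works directly with a strictly increasing chain of algebraic sets $Y_1\varsubsetneq Y_2\varsubsetneq\cdots$, the corresponding strictly decreasing chain of radicals, a set of pairwise-distinct witnesses $(p_i\approx q_i)\in \mathrm{Rad}(Y_i)\setminus\mathrm{Rad}(Y_{i+1})$, and a short two-sided squeeze on $V_A(S\setminus S_0)$; the converse is a clean enumeration-and-tail argument. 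You are right that neither the equational-domain nor the weak-equational-noetherian hypothesis is used in your proof --- in the paper they serve only to reconcile arbitrary open sets with complements of finitely-based algebraic sets so that Lemma~2 applies, and bypassing the topological lemma makes them unnecessary. So your argument in fact yields a slightly stronger statement, namely that the forward implication holds for arbitrary algebras, and the converse for arbitrary algebras over a countable language. One small point worth making explicit (you do note it implicitly): in the forward direction the infinitude of $S$, hence the pairwise distinctness of the chosen witnesses, is what guarantees that every finite subset of $S\setminus S_0$ is proper, so that $A$-stability can be applied without restriction.
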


\begin{proof}
Suppose for any system $S$, there exists a finite subset $S_0$ such that $S\setminus S_0$ is $A$-stable. We show that $A^n$ is $\omega$-cocompact.
Let $A=\bigcup_{i=1}^{\infty}C_i$ be an open covering. We have $C_i=A^n\setminus V_A(S_i)$ for some finite $S_i$. So,
$\bigcap_{i=1}^{\infty}V_A(S_i)=\emptyset$ and hence $V_A(\bigcup_{i=1}^{\infty}S_i)=\emptyset$. Suppose $S=\bigcup_{i=1}^{\infty}S_i$.
Then, there exists a finite subset $S_0\subseteq S$ such that $S\setminus S_0$ is $A$-stable. Since $S_0$ is finite, so there exists $m$ such
that for all $j\geq m$, the set $V_A(\bigcup_{i=j}^{\infty}S_i)$ does not depend on $j$. Hence for all $j_1, j_2\geq m$, we have
$$
\bigcap_{i=j_1}^{\infty}V_A(S_i)=\bigcap_{i=j_2}^{\infty}V_A(S_i).
$$
This is equivalent to
$$
\bigcup_{i=j_1}^{\infty}C_i=\bigcup_{i=j_2}^{\infty}C_i,
$$
and hence $A^n$ is $\omega$-cocompcat. Now, suppose the language $\mathcal{L}$ is countable and $A$ is equational Artinian. Let $S$ be a system
of equations. We index $S$ by natural numbers as $S=\{ p_i\approx q_i\}_{i=1}^{\infty}$. Suppose $C=A^n\setminus V_A(S)$. We have
$$
C=\bigcup_{i=1}^{\infty}(A^n\setminus V_A(p_i\approx q_i)),
$$
so by our assumption on $\omega$-cocompactness of $A$, there exists $m$ such that for all $j_1, j_2\geq m$,
$$
\bigcup_{i=j_1}^{\infty}(A^n\setminus V_A(p_i\approx q_i))=\bigcup_{i=j_2}^{\infty}(A^n\setminus V_A(p_i\approx q_i)).
$$
Let $S_0=\{ p_1\approx q_1, \ldots, p_{m-1}\approx q_{m-1}\}$. Therefore for any $j$, we have
$$
V_A(S\setminus S_0)=V_A(S\setminus (S_0+p_m\approx q_m+\cdots+p_{m+j}\approx q_{m+j})),
$$
so $S\setminus S_0$ is $A$-stable.
\end{proof}

{\bf Acknowledgement} The authors would like to thank Andreas Blass, Anton Klyachko, Gerhard Paseman and Benjamin Steinberg for their useful comments and suggestions during a discussion in MathOverflow.

\end{document}